\renewcommand{\dfrac}[2]{\lower0.15ex\hbox{\large$\textstyle\frac{#1}{#2}$}}
\newtheorem{theorem}{Theorem}[section]
\newtheorem{lemma}[theorem]{Lemma}
\newtheorem{corollary}[theorem]{Corollary}
\newtheorem{remark}[theorem]{Remark}
\numberwithin{equation}{section}
\begin{document}

\title
{Large induced distance matchings in certain sparse random
graphs
\thanks{The work was supported  by NSFC.}}
\author
{{Fang Tian$^1$\thanks{Corresponding Author:\
tianf@mail.shufe.edu.cn(Email Address)}\quad Yu-Qin Sun$^2$\quad Zi-Long Liu$^3$}\\
{\small $^1$School of Mathematics}\\
{\small Shanghai University of Finance and Economics, Shanghai, 200433, China}\\
{\small\tt tianf@mail.shufe.edu.cn}\\
{\small $^2$School of Mathematics and Physics}\\
{\small Shanghai University of Electric Power, Shanghai, 200090, China}\\
{\small\tt 2008000011@shiep.edu.cn}\\
{\small $^3$School of Optical-Electrical and Computer Engineering}\\%
{\small University of Shanghai for Science and Technology, Shanghai,
200093, China} \\
{\small\tt liuzl@usst.edu.cn}\\[1ex]
}

\date{}
 \maketitle

{\bf Abstract}\quad
For a fixed integer $k\geqslant 2$,
let $G\in \mathcal{G}(n,p)$ be a simple connected graph on $n\rightarrow\infty$
vertices with the expected degree $d=np$ satisfying $d\geqslant c$ and $d^{k-1}= o(n)$
for some large enough constant $c$. We show that the asymptotical size of any maximal
 collection of edges $M$ in $G$ such that no two edges in $M$
are within distance $k$, which is called a distance $k$-matching, is between
$ \frac{(k-1)n\log d}{4d^{k-1}}$ and $ \frac{k n \log d}{2d^{k-1}}$.
We also design a randomized greedy algorithm
to generate one large distance $k$-matching in $G$ with asymptotical size
$ \frac{kn\log d}{4d^{k-1}}$.
 Our results partially generalize the results on the size  of
the largest distance $k$-matchings from the case $k=2$ or $d=c$ for some large constant $c$.

\vskip0.4cm \noindent {\bf Keywords}\quad random graphs, induced matchings, distance matchings,
expected degree.

\vskip0.4cm \noindent {\bf Mathematics Subject Classification}\quad  05C35, 05C70, 05C80.

\section{Introduction}

Random graphs are classical mathematical models to capture  the main behaviors
of real-world graphs in massive data sets. Recall that the  Erd\H{o}s-R\'{e}nyi
random graph model $\mathcal{G}(n,p)$  is on the vertex set $[n]=\{1,\cdots,n\}$
and every edge appears independently with probability~$p$. A classic question
is to investigate whether $G\in\mathcal{G}(n,p)$ contains a copy of given graph
as an induced subgraph with high probability (abbreviated to w.h.p., meaning
with probability tending to $1$ as $n$ tends to infinity).
Some large induced subgraphs to look for in $G\in\mathcal{G}(n,p)$ are independent sets,
trees,  cycles and matchings.

For any fixed positive integer $k$, a set of edges ${M}_k$ of a graph $G=(V,E)$
defined on the vertex set $[n]$, with the additional constraint that
no two edges are within distance $k$, is called a distance $k$-matching.
The distance between two vertices is the number of edges in a shortest path
between them, while the distance between two edges is the number of
vertices in a shortest path between them. We often refer to a path by the natural
sequence of its vertices, writing $P=x_0x_1\cdots x_k$ with distinct vertices
$x_i$ for $0\leqslant i\leqslant k$ and calling $P$ a path between $x_0$ and $x_k$ of length $k$.
The qualifier ``distance" is normally omitted in the remainder of this paper.
A $1$-matching is a matching and a $2$-matching is also known as an induced matching
or a strong matching~\cite{oliver21,joo14,maft94}.
A $k$-matching ${M}_k$ is said to be maximal if no other edges can be added to ${M}_k$ while
keeping the property of being a distance $k$ away.
Let $e(M_k)$ denote the size of the $k$-matching $M_k$.
The $k$-matching number, denoted by $um_k(G)$, is $e(M_k)$ of the largest $k$-matching $M_k$
in $G$.

Little is known about the random variable $um_k(G)$ when  $G\in\mathcal{G}(n,p)$ and  $k\geqslant 2$.
Maftouhi and Gordones~\cite{maft94} firstly considered
$um_2(G)$ for  $G\in\mathcal{G}(n,p)$ when  $0<p<1$ is a constant.
Czygrinow and Nagle~\cite{andr04} generalized this result. They showed that w.h.p.
$um_2(G)\leqslant \frac{n\log d}{d}$ when the expected degree $d=np=\Omega(1)$
and $d=o(n)$, further $um_2(G)\leqslant \log_h n$ with $h= \frac{1}{1-p}$
when $p> n^{-\epsilon}$ for any given $\epsilon>0$.
In a specially  sparse case when  $d$ is a sufficiently large  constant,
for any fixed integer $k\geqslant 2$ and $G\in \mathcal{G}(n,p)$,
Kang and Manggala~\cite{kang12} improved the upper bound of $um_k(G)$
 to $um_k(G)\leqslant (1+o(1))\frac{k n\log d}{2d^{k-1}}$.
It is speculated in~\cite{kang12} that the upper bound here is close to the correct
  value of $um_k(G)$ when $k\geqslant 2$.

  Recently, Cooley et al. in~\cite{oliver21} confirmed
  this conclusion when $k=2$, $ d\geqslant{c}$ and $d=o(n)$ for some large enough constant $c$.
They showed that $um_2(G)=(1+o(1)) \frac{n\log d}{d}$ relying on two
main ingredients: the second moment method and Talagrand's inequality.
  Talagrand's inequality is a useful tool to show that a random
variable is tightly concentrated  under certain conditions.
In fact,
 they showed that $um_2(G)= (1+o(1))\log_h d$ with $h= \frac{1}{1-p}$ in a broader range of $p$.
Unfortunately, the conditions of Talagrand's inequality  only hold on $um_2(G)$.
It is interesting  to consider the lower bounds of $um_k(G)$ when $k\geqslant 3$, while
there are less discussions.
Tian in~\cite{tian18} showed a trivial lower bound  of $um_k(G)=\Omega(\log n)$
when $d$ is a sufficiently large constant.

In this paper, for any fixed integer $k\geqslant 2$ and $G\in \mathcal{G}(n,p)$,
as a further step of~\cite{oliver21,andr04,kang12,tian18}, we consider the
bounds of $um_k(G)$. In fact, we mainly
 investigate $e(M_k)$ of any
maximal $k$-matching $M_k$ in $G\in \mathcal{G}(n,p)$.
Let $d(n)=np$ be the expected degree of $G$,
and we usually write $d$ instead of $d(n)$.

\begin{theorem}
Let $k\geqslant2$ be a fixed integer, $G\in \mathcal{G}(n,p)$ with
$d\geqslant c$ and $d^{k-1}=o(n)$ for some large enough constant $c$.
For any maximal $k$-matching $M_k$ in $G$, with high probability, $(1+o(1)) \frac{(k-1)n\log d}{4d^{k-1}}\leqslant e(M_k)\leqslant (1+o(1)) \frac{k n \log d}{2d^{k-1}}$.
\end{theorem}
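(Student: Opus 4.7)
The plan is to handle the upper and lower bounds by very different methods. Since any maximal $k$-matching is trivially bounded above by the maximum $k$-matching $um_k(G)$, the upper bound reduces to a first-moment estimate on $k$-matchings of a given size, extending Kang--Manggala~\cite{kang12} from the constant-$d$ regime to the regime $d\to\infty$ with $d^{k-1}=o(n)$. The lower bound is the substantive part: one must rule out small maximal matchings, which I would do by translating maximality into the structural statement ``the set of vertices at graph-distance greater than $k-1$ from $V(M_k)$ is an independent set of $G$,'' and then combining the classical upper bound on $\alpha(\mathcal{G}(n,p))$ with a sharp high-probability estimate of the size of the $(k-1)$-neighborhood of $V(M_k)$.

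For the upper bound, let $X_m$ count $k$-matchings of size $m$ in $G$. The number of ordered systems of $m$ vertex-disjoint edges on $[n]$ is $\sim n^{2m}/(2^m m!)$; each candidate needs all $m$ edges to be present (factor $p^m$) and no path of length at most $k-1$ connecting endpoints of distinct matching edges. Since the expected number of such short paths between any fixed pair of candidate edges is $\Theta(d^{k-1}/n)$ (dominated by paths of length exactly $k-1$), a Janson-type estimate yields $\Pr[\text{no short paths}]\le \exp(-\Omega(m^2 d^{k-1}/n))$. Hence $\log\mathbb{E}[X_m]\le m\log(nde/(2m))-\Omega(m^2 d^{k-1}/n)$, which becomes $-\omega(1)$ once $m\ge(1+\varepsilon)\frac{kn\log d}{2d^{k-1}}$, and Markov's inequality then gives the upper bound.

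For the lower bound, fix any maximal $k$-matching $M_k$ of size $m$, let $V(M_k)$ be its endpoint set, and define $N:=\{v : d_G(v,V(M_k))\le k-1\}$. Maximality is equivalent to saying that every edge of $G$ has at least one endpoint in $N$, so $V\setminus N$ is an independent set of $G$. The standard first-moment argument yields $\alpha(G)\le(1+o(1))\frac{2n\log d}{d}$ w.h.p., so $|V\setminus N|\le(1+o(1))\frac{2n\log d}{d}$. On the other hand, since $d^{k-1}=o(n)$ the graph $G$ is locally tree-like on scales up to $k-1$ and the ball of radius $k-1$ about any fixed vertex concentrates around $\sum_{i=0}^{k-1}d^i$; a Chernoff/BFS argument on the tree-union then gives $|V\setminus N|\ge n\exp(-(1+o(1))\cdot 2m d^{k-1}/n)$ w.h.p., uniformly over all admissible endpoint sets. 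Combining these two inequalities, $n\exp(-(1+o(1))\cdot 2m d^{k-1}/n)\le(1+o(1))\frac{2n\log d}{d}$, and solving for $m$ with a careful layerwise accounting of the ball-size sum $\sum_{i=0}^{k-1}d^i$ yields the bound $e(M_k)\ge(1-o(1))\frac{(k-1)n\log d}{4d^{k-1}}$.

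The hardest step is establishing the uniform concentration $|V\setminus N|\ge n\exp(-(1+o(1))\cdot 2m d^{k-1}/n)$ on which the argument depends. The naive deterministic bound $|N|\le 2m\sum_{i=0}^{k-1}d^i$ becomes vacuous once $2m d^{k-1}$ approaches $n$, which is exactly the regime needed to recover the $\log d$ factor. To do better one must control both the random ball sizes and the way their overlaps accumulate as $M_k$ grows, using a Talagrand- or McDiarmid-type inequality on $|V\setminus N|$ viewed as a function of the edges of $G$, together with a union bound over all candidate matchings of size $\sim n\log d/d^{k-1}$. The precise prefactor $(k-1)/4$ should then emerge from balancing this concentration against the independence-number bound, and I expect this uniformity/concentration step to be the main technical bottleneck of the proof.
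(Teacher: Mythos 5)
Your upper bound argument is the same first–moment computation the paper carries out in Section~3 (extending Kang--Manggala by computing $\mathbb{E}[X_m]$ via Lemma~2.4(b) and Stirling, and showing it tends to $0$ at $m=(1+o(1))\frac{kn\log d}{2d^{k-1}}$), so that part is on track. Your starting observation for the lower bound --- that maximality of $M_k$ forces $\Gamma_{\geqslant k}(M_k)$, the set of vertices at distance at least $k$ from $V(M_k)$, to be an independent set --- is also exactly the observation the paper exploits. But the route you take from there diverges from the paper's, and the point where you yourself flag the ``main technical bottleneck'' is precisely where the plan breaks.

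You want to establish a \emph{uniform} high-probability lower bound $|\Gamma_{\geqslant k}(M)|\geqslant n\exp[-(1+o(1))2md^{k-1}/n]$ over all candidate $2m$-vertex sets, and then compare against $\alpha(G)\leqslant(1+o(1))\frac{2n\log d}{d}$. Two problems. First, your proposed tool for uniformity, a Talagrand- or McDiarmid-type inequality, is ruled out by the very phenomenon the paper highlights in Remark~5.3: for $k\geqslant 3$ the relevant random variable is neither Lipschitz nor $\theta$-certifiable, because flipping a single coordinate (one vertex's neighbourhood within $[i]$) can move $\Theta(d^{k-2})$ vertices in or out of the radius-$(k-1)$ neighbourhood of $V(M)$; the resulting bounded-difference variance scale is far too coarse to give the required deviation $o(n\,d^{-(k-1)/2})$ simultaneously for all $\exp[\Theta(m\log(n/m))]$ matchings. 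Second, even granting the concentration as stated, the arithmetic does not produce $(k-1)/4$: from $n\exp[-2md^{k-1}/n]\lesssim 2n\log d/d$ one solves to $m\gtrsim\frac{n\log d}{2d^{k-1}}$, i.e.\ a constant $1/2$. That is bigger than $(k-1)/4$ for $k=2$ and smaller for $k\geqslant 4$, so the ``layerwise accounting'' you gesture at is not an optional refinement --- the claimed constant does not actually fall out of the inequality you wrote, and no mechanism for it is supplied.

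The paper sidesteps the need for any freestanding uniform concentration theorem by running a first-moment argument on $Y_m$, the count of \emph{maximal} $k$-matchings of size $m$. The heart is Claim~4.1: for a fixed matching $M$, it decomposes the event $\{M\text{ maximal}\}$ using $\mathcal{F}=\{|\Gamma_{\geqslant k}(M)|>\frac{n}{2}d^{-(k-1-\epsilon)/2}\}$ and $\mathcal{E}=\{\Gamma_{\geqslant k}(M)\text{ independent}\}$, bounds $\mathbb{P}[\mathcal{F}^c]$ by Chernoff on a dominating binomial and bounds $\mathbb{P}[\mathcal{E}\cap\mathcal{F}]$ by the probability a set of that size is independent, obtaining $\mathbb{P}[\mathcal{F}^c]+\mathbb{P}[\mathcal{E}\cap\mathcal{F}]<2\exp[-\frac{n}{8}d^{-(k-1-\epsilon)}]$. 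This per-matching penalty is then multiplied by the number of matchings and by $\mathbb{P}[M\text{ is a }k\text{-matching}]$ (Lemma~2.4(b)), and the monotonicity Lemma~2.5 allows one to bound $\sum_{m\leqslant m^*}\mathbb{E}[Y_m]$ by $m^*\mathbb{E}[Y_{m^*}]$. The constant $(k-1)/4$ is what makes $m^*(\frac{k+1+\epsilon}{2}\log d - \frac{d^\epsilon}{2(k-1-\epsilon)\log d})<0$ hold and simultaneously keeps $m^*$ inside the range where Lemma~2.5 is available (Remark~2.6 explains that pushing to $k/4$ breaks the monotonicity argument). The union bound that you were going to pay for via a concentration inequality is absorbed into $\mathbb{E}[Y_m]$ and offset by the extremely small factor $\exp[-\frac{n}{8}d^{-(k-1-\epsilon)}]$. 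In short: your structural observation is right, but your implementation plan relies on a tool the paper shows does not apply here, and your derivation of the constant is not actually a derivation. You should instead bound $\mathbb{P}[M\text{ maximal}]$ directly and take expectations, which is how the paper makes the argument close.
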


 As a corollary of Theorem 1.1, we actually obtain the bounds of $um_k(G)$
when  $k\geqslant 3$.
\begin{corollary}
Let $k\geqslant3$ be a fixed integer, $G\in \mathcal{G}(n,p)$ with
$d\geqslant c$ and $d^{k-1}=o(n)$ for some large enough constant $c$.
Then, with high probability, $(1+o(1)) \frac{(k-1)n\log d}{4d^{k-1}}\leqslant
um_k(G)\leqslant (1+o(1)) \frac{k n \log d}{2d^{k-1}}$.
\end{corollary}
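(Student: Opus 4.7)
The plan is to deduce the statement directly from Theorem 1.1 using the elementary observation that every \emph{maximum} $k$-matching is in particular \emph{maximal}. I would first fix $G\in\mathcal{G}(n,p)$ and let $M_k^{\star}$ be any $k$-matching in $G$ whose size equals $um_k(G)$. Since no edge of $G$ can be added to $M_k^{\star}$ without violating the distance-$k$ condition (otherwise $M_k^{\star}$ would not be of maximum size), $M_k^{\star}$ is maximal in the sense defined in the introduction.

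Next, I would invoke Theorem 1.1, which asserts that with high probability every maximal $k$-matching $M_k$ in $G$ satisfies the two-sided bound $(1+o(1))\frac{(k-1)n\log d}{4d^{k-1}}\leqslant e(M_k)\leqslant (1+o(1))\frac{kn\log d}{2d^{k-1}}$. Applying this inequality to $M_k^{\star}$ yields precisely the bound claimed for $um_k(G)$. The restriction $k\geqslant 3$ plays no role in the deduction itself; it is imposed merely because for $k=2$ the sharper asymptotic $um_2(G)=(1+o(1))\frac{n\log d}{d}$ of Cooley et al.~\cite{oliver21} is already available, so restricting attention to $k\geqslant 3$ records the genuinely new content of the corollary.

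I do not foresee any real obstacle, because the substance of the argument lies entirely in Theorem 1.1; in particular, the lower bound there is proved for an \emph{arbitrary} maximal matching, which is exactly what makes the passage to $um_k(G)$ automatic. The one subtlety I would want to check is that the ``with high probability'' quantifier in Theorem 1.1 is read as ``w.h.p.\ every maximal $k$-matching in $G$ simultaneously satisfies the bound'', rather than ``for each fixed maximal $k$-matching $M_k$, w.h.p.\ the bound holds''. The former reading is the natural one, since a maximal $k$-matching cannot be specified independently of the random graph, and it is what legitimises the choice of $M_k^{\star}$ after $G$ is revealed.
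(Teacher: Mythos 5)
Your proposal is correct and takes the same route the paper implicitly intends: the corollary is stated without a separate proof precisely because a maximum $k$-matching is automatically maximal (it cannot be extended, else it would not be maximum), so both bounds in Theorem 1.1 transfer immediately to $um_k(G)$. Your remark about the quantifier order in ``w.h.p.'' is also the right reading, and it is confirmed by the paper's proof technique (first-moment / Markov arguments show that w.h.p.\ no $k$-matching of size exceeding the upper threshold exists and no maximal $k$-matching of size below the lower threshold exists, i.e.\ the bounds hold simultaneously for all such matchings in the revealed graph).
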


 In fact, we can run  a randomized greedy
 algorithm to generate a specially large $k$-matching with asymptotic size
 $ \frac{k n\log d}{4d^{k-1}}$ for an input graph $G\in \mathcal{G}(n,p)$.
It improves the lower bound of $um_k(G)$
 in Corollary~1.2.


\begin{theorem}
Let $k\geqslant2$ be a fixed integer,  $G\in \mathcal{G}(n,p)$ with
$d\geqslant c$ and $d^{k-1}=o(n)$ for some large enough constant $c$. With high probability,
there exists a $k$-matching with size $(1+o(1))\frac{k n \log d}{4d^{k-1}}.$
\end{theorem}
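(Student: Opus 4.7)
The plan is to run a random-order greedy algorithm on the edges of $G$ and analyse its output size via a differential-equation argument that exploits the locally tree-like structure forced by $d^{k-1}=o(n)$. Assign each $e\in E(G)$ an independent uniform priority $\tau_e\in[0,1]$, scan the edges in increasing order of $\tau_e$, and add $e$ to the current matching $M$ iff no edge already in $M$ is within distance $k$ of $e$ in $G$. The output is automatically a maximal distance $k$-matching.

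First I would establish the local structure of $G$. A standard first-moment calculation using $d^{k-1}=o(n)$ yields that w.h.p.\ $|B_{k-1}(v)|=(1+o(1))\,d^{k-1}$ with $B_{k-1}(v)$ inducing a tree for every $v\in V(G)$, and that cycles of length $\leqslant 2k$ are rare. Consequently every edge $e=uv$ has exactly $D=(1+o(1))\cdot 2d^k$ other edges within distance $\leqslant k$, and the conflict graph $H$ on $E(G)$ (two edges adjacent iff their $G$-distance is $\leqslant k$) is essentially $D$-regular and locally tree-like.

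Next I would track two functionals: the matching size $S(T)=|M(T)|$ and the \emph{conflict volume} $C(T)=\bigl|\bigcup_{e\in M(T)}N_H(e)\bigr|$. When $e$ is added at step $T$, the fresh forbidden edges are $N_H(e)\setminus C(T-1)$, whose size is $(1+o(1))\,D(1-c)$ with $c=C/|E|$, because local tree-likeness and the i.i.d.\ priorities force the conflict set to be well-mixed with respect to a typical neighbourhood $N_H(e)$. Writing $s=S/|E|$, $a=1-s-c$, $\tau=T/|E|$, this yields
\[
 \frac{ds}{d\tau}=\frac{a}{1-\tau},\qquad \frac{dc}{d\tau}=\frac{Da(1-c)}{1-\tau}.
\]
Dividing gives $dc/ds=D(1-c)$, hence $c=1-e^{-Ds}$ and $a=e^{-Ds}-s$. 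The algorithm exhausts when $a\to 0$, i.e., at $s^{*}$ with $e^{-Ds^{*}}=s^{*}$, which gives $s^{*}=(1+o(1))\,\frac{\log D}{D}$. Substituting $|E|=(1+o(1))\,nd/2$ and $\log D=(1+o(1))\,k\log d$:
\[
 S^{*}=|E|\,s^{*}=(1+o(1))\,\frac{nd}{2}\cdot\frac{k\log d}{2d^k}=(1+o(1))\,\frac{kn\log d}{4d^{k-1}}.
\]
For concentration of $|M|$ around this trajectory I would invoke Wormald's differential-equation method, or equivalently a bounded-differences martingale on the i.i.d.\ priorities $(\tau_e)$, using that a single priority change perturbs $|M|$ only through the $O(d^{k-1})$ edges in the local neighbourhood of $e$.

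The main obstacle is justifying the ``fresh-kill equals $D(1-c)$'' estimate, i.e., the claim that a typical $H$-neighbourhood $N_H(e)$ meets the growing conflict set $C$ in the correct proportion $c$. The idea is that $H$ inherits the local-tree structure of $G$, so $N_H(e)$ is supported on an $O(d^{k-1})$-sized local region of $G$ whose edges carry fresh i.i.d.\ priorities, which decorrelates the greedy decisions made inside and outside this region. A separate first-moment bound isolates the rare ``bad'' edges (whose $(k-1)$-ball is atypically large or contains a short cycle) and shows their contribution to $|M|$ is $o\bigl(kn\log d/(4d^{k-1})\bigr)$. With the approximation valid uniformly on good edges, the ODE analysis and the concentration step combine to yield the claimed asymptotic w.h.p.
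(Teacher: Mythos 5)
Your proposal is a genuinely different route from the paper's. The paper does not analyse a random-order greedy or set up a differential equation; instead it picks $2s$ vertices at random for $s=\frac{n}{4d^{k-1}}[k\log d-3\log(k\log d)]$, and proves (Theorem~5.1) via a layer-by-layer Chernoff bound together with a union bound over all $\binom{n}{2s}$ choices of $S$ that \emph{every} set of $2s$ vertices has $\Theta\bigl(\frac{n}{d^{k/2}}(k\log d)^{3/2}\bigr)$ vertices at distance $\geqslant k$ from it which moreover span at least one edge. This makes a simple ``seed-and-repair'' algorithm (replace one bad pair at a time by a far-away edge) terminate deterministically once Theorem~5.1 is established, with no concentration argument for the algorithm's output needed at all: the union bound has quietly absorbed that burden. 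Your approach trades this elementary global argument for a process-level analysis, which would be more informative if it went through (it would pin down the constant for the whole range rather than just one algorithm), but it has to pay for that with a rigorous tracking of the greedy trajectory.

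There are two concrete gaps. First, the conflict graph $H$ is \emph{not} locally tree-like in the sense your ODE needs. Two edges $f_1,f_2\in N_H(e)$ both lie inside the $(k-1)$-ball of $V(e)$, so a constant fraction of the $\binom{D}{2}$ pairs inside a single $H$-neighbourhood are themselves $H$-adjacent; $H$ therefore has $\Theta(D)$ triangles through a typical vertex, unlike a high-girth $D$-regular graph, and the ``fresh-kill $=D(1-c)$'' mixing step is exactly the place where this bites, since $\bigcup_{e\in M}N_H(e)$ is a union of heavily overlapping local clusters rather than a well-spread subset of $E(H)$. You flag the obstacle, but the sketch (``local tree-likeness and i.i.d.\ priorities decorrelate'') is not an argument: local tree-likeness of $G$ does not imply local tree-likeness of $H$. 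Second, the concentration step as written does not work. Changing one priority $\tau_e$ can cascade through the greedy process, so the perturbation is not confined to the $O(d^{k-1})$ edges near $e$; and even granting that optimistic Lipschitz bound, Azuma over $|E|\sim nd/2$ variables gives a deviation of order $d^{k-1}\sqrt{nd}$, whereas the target mean is $\Theta\bigl(\frac{n\log d}{d^{k-1}}\bigr)$. The ratio $d^{2k-3/2}/(\sqrt{n}\log d)$ is $o(1)$ only when $d^{k-1}=O(n^{(k-1)/(4k-3)})$, which is far short of the full range $d^{k-1}=o(n)$ the theorem covers; so the bounded-differences route fails precisely in the regime where $d^{k-1}$ is close to $n$. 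These two points would both need to be resolved — probably by a Wormald-type or local-weak-convergence argument tailored to $H$'s cluster structure, with a much more careful concentration bound — before the ODE analysis can be called a proof.
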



We remark that $d^{k-1}=o(n)$ is a necessary condition in the proof
of Theorem~1.1 and Theorem 1.3. The way to obtain the
upper bound of  the largest $k$-matching in Theorem 1.1 is similar with the one when $d$ is a
 large constant in~\cite{kang12}. For $k=2$, it
 coincides with the upper bound $um_2(G)\leqslant (1+o(1))
 \frac{n\log d}{d}$  when $d=\Omega(1)$ and $d=o(n)$ in~\cite{oliver21,andr04}.
 The  way to obtain
 the lower bound of $um_k(G)$ for $k\geqslant 3$ must be 
 distinct with the one in~\cite{oliver21}. 
 We firstly
show the lower bound of $e(M_k)$ for any maximal $k$-matching $M_k$  in
Theorem~1.1, and we further  investigate an interesting property for some given number of vertices
of $G$ to show the other one in
Theorem~1.3.  
 We believe it is hard to improve the results here. 

The organization of our paper is as follows. Notation and  auxiliary
results used throughout the paper are presented in Section 2.
For $G\in \mathcal{G}(n,p)$, the upper and lower bounds of $e(M_k)$ for
any maximal $k$-matching $M_k$ in Theorem~1.1 
are discussed in Section 3 and Section 4, respectively.
In Section 5, we design a simple randomized greedy algorithm
 to generate a large $k$-matching $M_k$ in $G$ and prove its efficiency.
The last section concludes the works.

 \vskip20pt
\section{Preliminaries}

In this section, we introduce a few  useful lemmas that help us prove
the main results and fix our notations.
We will use the standard Landau notations $o(\cdot), O(\cdot), \Omega(\cdot)$ and $\Theta(\cdot)$.
Throughout the following sections we assume that $n\rightarrow\infty$,
$c$ is a large enough constant and $k\geqslant 2$ is a fixed integer.
When not otherwise explicitly stated, the asymptotics in this notation
are with respect to $n$ or with respect to $c$.
For example,  for two positive-valued functions $f, g$,
we write $f\sim g$ to denote $\lim_{n\rightarrow \infty}f/g=1$ or
 $\lim_{c\rightarrow \infty}f/g=1$, and $f=o(g)$ to denote $\lim_{n\rightarrow \infty}f/g=0$
 or $\lim_{c\rightarrow \infty}f/g=0$.
For an event $\mathcal{A}$ and a random variable $Z$ in an
arbitrary probability space $(\Omega,\mathcal{F},\mathbb{P})$,
$\overline{\mathcal{A}}$ denotes the complement of the event $\mathcal{A}$,
$\mathbb{P}[\mathcal{A}]$, $\mathbb{E}[Z]$ and $\mathbb{V}[Z]$  denote the probability of $\mathcal{A}$,
 the expectation and the variance
of $Z$.  An event is said to occur  with high probability (w.h.p.  for short), if
the probability that it holds tends to 1 when $n\rightarrow\infty$.
Recall that ${\rm Bin}(n,p)$ denotes a Binomially distributed random variable with parameters $n$ and $p$,
that is, it is the sum of $n$ independent variables, each equal to $1$ with probability $p$ and $0$ otherwise.
All logarithms are natural. The floor and ceiling signs are omitted whenever they are not crucial.

We use the following standard Chernoff inequality to estimate the
 tail probability of the binomial distribution,  and Janson's inequality to
 estimate the probability that none of a set of bad events occur when these
 events are mostly independent, see~\cite{alon08}.  The statements have
 been adapted slightly from~\cite{alon08} in terms of our settings.

\begin{lemma}  {\rm{(Chernoff Inequality)}} For  any $\delta\in (0,1)$,
\begin{align*}
\mathbb{P}[|{\rm Bin}(n,p)-np|>\delta np]<2\exp[-\delta^2np/2].
\end{align*}
\end{lemma}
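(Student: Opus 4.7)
The plan is to apply the classical exponential moment method (Chernoff's trick). Write $X=\mathrm{Bin}(n,p)=X_1+\cdots+X_n$ where the $X_i$ are i.i.d.\ Bernoulli($p$) variables, so $\mathbb{E}[X]=np$. The two tails are handled separately and combined at the end via a union bound, which supplies the factor of $2$.

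For the upper tail, fix a parameter $t>0$ and apply Markov's inequality to the non-negative random variable $e^{tX}$:
$$\mathbb{P}[X>(1+\delta)np]\leq e^{-t(1+\delta)np}\,\mathbb{E}[e^{tX}].$$
By independence and the elementary inequality $1+x\leq e^x$, the moment generating function satisfies
$$\mathbb{E}[e^{tX}]=\bigl(1-p+pe^t\bigr)^n\leq \exp\!\bigl(np(e^t-1)\bigr).$$
Choosing $t=\ln(1+\delta)$ to optimize the resulting bound yields
$$\mathbb{P}[X>(1+\delta)np]\leq\exp\!\bigl(-np\bigl[(1+\delta)\ln(1+\delta)-\delta\bigr]\bigr).$$
A symmetric argument applied to $e^{-tX}$ and optimized at $t=-\ln(1-\delta)$ gives the lower tail
$$\mathbb{P}[X<(1-\delta)np]\leq\exp\!\bigl(-np\bigl[(1-\delta)\ln(1-\delta)+\delta\bigr]\bigr).$$

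To finish, I would reduce the two bracketed expressions to the clean form $\delta^2/2$ by Taylor-expanding around $\delta=0$. For the lower tail one has the alternating-free series $(1-\delta)\ln(1-\delta)+\delta=\sum_{j\geq 2}\delta^j/(j(j-1))\geq \delta^2/2$, valid on $(0,1)$ with no additional work. For the upper tail the expansion $(1+\delta)\ln(1+\delta)-\delta=\delta^2/2-\delta^3/6+\delta^4/12-\cdots$ has mixed signs, and needs a short convexity or monotonicity check on $(0,1)$ to certify the desired shape of lower bound.

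The main obstacle is precisely this numerical constant in the exponent of the upper tail: the truly clean classical statement is $\exp(-\delta^2 np/3)$ on $(0,1)$, and matching the stated $\exp(-\delta^2 np/2)$ uniformly for both tails requires either restricting the range of $\delta$ (harmless in applications since the paper uses the bound with $\delta=o(1)$) or a slightly sharper analysis of $(1+\delta)\ln(1+\delta)-\delta$. Apart from this small bookkeeping, every step is standard, which is why the paper merely quotes the result from Alon--Spencer \cite{alon08}.
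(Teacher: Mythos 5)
The paper does not actually prove Lemma~2.1; it is quoted (``adapted slightly'') from Alon--Spencer, so there is no internal proof for you to match. Your outline via the exponential moment method and a union bound over the two tails is the canonical route, and the substantive part of your review is your flagged obstacle, which is a genuine one and, in fact, sharper than you state it. Setting $h(\delta)=(1+\delta)\ln(1+\delta)-\delta-\delta^2/2$, one has $h(0)=0$ and $h'(\delta)=\ln(1+\delta)-\delta<0$ for all $\delta>0$, so $(1+\delta)\ln(1+\delta)-\delta<\delta^2/2$ on the entire interval $(0,1)$, not merely near $\delta=1$. Thus no ``short convexity check'' can rescue the upper tail with the constant $1/2$; the clean constant the method yields uniformly on $(0,1)$ is $1/3$ (from $(1+\delta)\ln(1+\delta)-\delta\geqslant\delta^2/(2+\delta)\geqslant\delta^2/3$), giving $\mathbb{P}[|\mathrm{Bin}(n,p)-np|>\delta np]<2\exp[-\delta^2 np/3]$.

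Consequently the lemma as printed in the paper is marginally stronger than what either the argument or the cited source delivers, and your proposal is correct in refusing to close that gap. The discrepancy is harmless downstream: in the one place where the two-sided bound is invoked, namely the estimate leading to (5.6), any positive constant in the exponent makes the probability vanish, and the decisive application in (4.18) is a lower-tail estimate, for which $(1-\delta)\ln(1-\delta)+\delta=\sum_{j\geqslant 2}\delta^j/(j(j-1))\geqslant\delta^2/2$ does hold verbatim, exactly as you note. A cleaner fix for the paper would be either to replace $1/2$ by $1/3$ in the statement of Lemma~2.1, or to state and use only the one-sided lower-tail form where the constant $1/2$ is legitimate.
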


Let $G\in\mathcal{G}(n,p)$ be a random edge subset of the complete graph $K_n$ and
$u,v$ be any two vertices in $[n]$. Let $P_1,P_2,\cdots,P_{l_1}$ denote all paths
of length at most $k-1$ in $K_n$ to connect the vertex $u$ with the vertex $v$. We have
only one edge in $K_n$ to connect $u$ with $v$.
While there are $(n-2)\cdots(n-i)$ paths of length  $i$ 
to connect $u$ with $v$ when $2\leqslant i\leqslant k-1$ and $k\geqslant 3$.
In summary, we have
\begin{equation*}
\begin{aligned}[b]
l_1&=\begin{cases}1,&k=2,\\1+\sum_{i=2}^{k-1}(n-2)\cdots(n-i),&k\geqslant3.
\end{cases}
\end{aligned}
\end{equation*}
Let $\mathcal{A}_i$ be the event that the path  $P_i$ exists in $G$ for $i\in [l_1]$.
For $i,j\in [l_1]$, we write $i\sim j$ if $i\neq j$ and $P_i\cap P_j\neq \emptyset$.
Let $\Delta=\sum_{i\sim j}\mathbb{P}[\mathcal{A}_i\cap \mathcal{A}_j]$
 and $U=\prod_{i=1}^{l_1}(1-\mathbb{P}[\mathcal{A}_i])$.
\begin{lemma} {\rm{(Janson's Inequality)}}\
 If $\mathbb{P}(\mathcal{A}_i)\leqslant \frac{1}{2}$ for any  $i\in [l_1]$,
 then $U\leqslant\mathbb{P}[\cap_{i=1}^{l_1} \overline{\mathcal{A}_i}]\leqslant U\exp[\Delta]$.
\end{lemma}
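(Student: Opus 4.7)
The plan is to prove the two-sided bound by treating the lower bound and the upper bound by essentially different techniques, since both are standard ingredients of the classical Janson inequality.

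For the lower bound $U\leqslant \mathbb{P}[\cap_i\overline{\mathcal{A}_i}]$, I would invoke the Harris--FKG correlation inequality on the product probability space $\{0,1\}^{E(K_n)}$, where each edge is present independently with probability $p$. Each $\mathcal{A}_i$ is the event that a fixed path $P_i$ is entirely present in $G$, so $\mathcal{A}_i$ is an increasing event in this product space; equivalently each $\overline{\mathcal{A}_i}$ is a decreasing event. The FKG inequality for monotone events on a product of two-point spaces (which is a one-line induction on $|E(K_n)|$, or a direct consequence of Chebyshev's sum inequality) yields
\begin{equation*}
\mathbb{P}\bigl[\cap_{i=1}^{l_1}\overline{\mathcal{A}_i}\bigr]\;\geqslant\;\prod_{i=1}^{l_1}\mathbb{P}[\overline{\mathcal{A}_i}]\;=\;U,
\end{equation*}
which is the desired lower bound, without needing the hypothesis $\mathbb{P}[\mathcal{A}_i]\leqslant 1/2$.

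For the upper bound $\mathbb{P}[\cap_i\overline{\mathcal{A}_i}]\leqslant U\exp[\Delta]$, the plan is to telescope and control one conditional probability at a time. Write
\begin{equation*}
\mathbb{P}\bigl[\cap_{i=1}^{l_1}\overline{\mathcal{A}_i}\bigr]\;=\;\prod_{i=1}^{l_1}\mathbb{P}\bigl[\overline{\mathcal{A}_i}\,\big|\,\cap_{j<i}\overline{\mathcal{A}_j}\bigr].
\end{equation*}
For a fixed $i$, partition the index set $\{1,\dots,i-1\}$ into $D_0=\{j<i:j\not\sim i\}$ and $D_1=\{j<i:j\sim i\}$, and let $B=\cap_{j\in D_0}\overline{\mathcal{A}_j}$, $C=\cap_{j\in D_1}\overline{\mathcal{A}_j}$. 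Because the edges supporting $\mathcal{A}_i$ are disjoint from those supporting every $\mathcal{A}_j$ with $j\in D_0$, we have $\mathbb{P}[\mathcal{A}_i\mid B]=\mathbb{P}[\mathcal{A}_i]$; combining the union bound $\mathbb{P}[\mathcal{A}_i\cap\overline{C}\mid B]\leqslant\sum_{j\in D_1}\mathbb{P}[\mathcal{A}_i\cap \mathcal{A}_j\mid B]$ with another application of FKG (the increasing event $\mathcal{A}_i\cap \mathcal{A}_j$ against the decreasing $B$) to remove the conditioning yields
\begin{equation*}
\mathbb{P}[\mathcal{A}_i\mid B\cap C]\;\geqslant\;\mathbb{P}[\mathcal{A}_i]-\sum_{j\in D_1}\mathbb{P}[\mathcal{A}_i\cap \mathcal{A}_j],
\end{equation*}
after also noting $\mathbb{P}[C\mid B]\leqslant 1$ for the denominator.

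From this I would rearrange into $\mathbb{P}[\overline{\mathcal{A}_i}\mid \cap_{j<i}\overline{\mathcal{A}_j}]\leqslant(1-\mathbb{P}[\mathcal{A}_i])\bigl(1+(1-\mathbb{P}[\mathcal{A}_i])^{-1}\sum_{j\in D_1}\mathbb{P}[\mathcal{A}_i\cap \mathcal{A}_j]\bigr)$, apply $1+x\leqslant e^{x}$, and invoke the hypothesis $\mathbb{P}[\mathcal{A}_i]\leqslant 1/2$ (which gives $(1-\mathbb{P}[\mathcal{A}_i])^{-1}\leqslant 2$). Taking the product over $i$ and observing that $\Delta=\sum_{i\sim j}\mathbb{P}[\mathcal{A}_i\cap \mathcal{A}_j]=2\sum_i\sum_{j\in D_1}\mathbb{P}[\mathcal{A}_i\cap \mathcal{A}_j]$ compensates the factor of $2$ exactly, yielding $\mathbb{P}[\cap_i\overline{\mathcal{A}_i}]\leqslant U\exp[\Delta]$. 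The main obstacle is the conditional-probability step: one must justify carefully that the relevant events can be decorrelated by FKG despite $B$ and $\mathcal{A}_i\cap \mathcal{A}_j$ potentially sharing edges (through $\mathcal{A}_j$), which is where monotonicity of the path-indicator events is essential and where the hypothesis $\mathbb{P}[\mathcal{A}_i]\leqslant 1/2$ enters to keep the constant out front equal to $1$.
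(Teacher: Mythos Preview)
Your proposal is correct and is precisely the standard proof of Janson's inequality from Alon--Spencer, which is exactly what the paper cites; the paper itself does not supply a proof of this lemma but simply quotes it as a known result, so there is nothing to compare beyond noting that your argument is the textbook one the citation points to.
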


Similarly, let ${M}$ be a matching with size $m$ in $K_n$. Assume that $uu'$ and $vv'$ are
two distinct edges in $M$, which have $\binom{m}{2}$ ways to choose them.
To count the number of paths of length at most $k-1$ in $K_n$ to connect the  edge $uu'$
with the edge $vv'$, there are four edges $uv$, $uv'$, $u'v$ or
 $u'v'$  in $K_n$ between the edges $uu'$ and $vv'$;
 $4(n-4)\cdots(n-i-2)$ counts the number of paths of  length $i$ in $K_n$ to connect the edge $uu'$
with the edge $vv'$ when $2\leqslant i\leqslant k-1$ and $k\geqslant 3$.
 Let $P'_1,P'_2,\cdots,P'_{l_2}$ be the paths of length at most $k-1$ in $K_n$
 between  any two edges of $M$. We  have
\begin{equation*}
\begin{aligned}[b]
l_2&=\begin{cases}4 \binom{m}{2},&k=2,\\4 \binom{m}{2}+4 \binom{m}{2}\sum_{i=2}^{k-1}(n-4)\cdots(n-i-2),&k\geqslant3.
\end{cases}
\end{aligned}
\end{equation*}
Let $\mathcal{B}_i$ be the event that the path $P'_i$ exists in $G$
for $i\in [l_2]$. For $i,j\in [l_2]$, we write $i\sim j$ if $i\neq j$
and $P'_i\cap P'_j\neq \emptyset$.
Let $\Delta^\prime=\sum_{i\sim j}\mathbb{P}[\mathcal{B}_i\cap \mathcal{B}_j]$
and  $U^\prime=\prod_{i=1}^{l_2}(1-\mathbb{P}[\mathcal{B}_i])$.
\begin{lemma} {\rm{(Janson's Inequality)}}\
If  $\mathbb{P}(\mathcal{B}_i)\leqslant \frac{1}{2}$ for any  $i\in [l_2]$,
 then $U^\prime\leqslant\mathbb{P}[\cap_{i=1}^{l_2} \overline{\mathcal{B}_i}]\leqslant
 U^\prime\exp[\Delta^\prime]$.
\end{lemma}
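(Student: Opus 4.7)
The plan is to derive Lemma~2.3 as a direct specialization of the standard Janson inequality from~\cite{alon08} to the family of path-indicator events $\{\mathcal{B}_i\}_{i\in[l_2]}$. The underlying probability space is $\mathcal{G}(n,p)$, in which the edges of $K_n$ appear as mutually independent Bernoulli$(p)$ indicators. Each $\mathcal{B}_i$ asserts that every edge of the specific path $P'_i$ belongs to $G$, so it is an increasing event depending only on the edge indicators of $E(P'_i)$. Consequently $\mathcal{B}_i$ and $\mathcal{B}_j$ are independent as soon as $P'_i$ and $P'_j$ are edge-disjoint, and the relation $i\sim j$ given by $P'_i\cap P'_j\neq\emptyset$ is a valid (possibly coarse) superset of the true dependency graph, so $\Delta^\prime$ bounds from above the sum that Janson's inequality actually requires.

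For the lower bound $U^\prime\leqslant\mathbb{P}[\cap_{i=1}^{l_2}\overline{\mathcal{B}_i}]$ I would invoke the FKG/Harris inequality: each $\overline{\mathcal{B}_i}$ is a decreasing event in the independent edge indicators, and a finite family of decreasing events is positively correlated, so $\mathbb{P}[\cap_{i}\overline{\mathcal{B}_i}]\geqslant\prod_{i}\mathbb{P}[\overline{\mathcal{B}_i}]=U^\prime$. This half needs no hypothesis on $\mathbb{P}(\mathcal{B}_i)$ at all; it is the inequality that later lets us pass from a product lower bound to a bound on the actual probability of survival.

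The upper bound $\mathbb{P}[\cap_{i}\overline{\mathcal{B}_i}]\leqslant U^\prime\exp[\Delta^\prime]$ is then precisely Janson's inequality in its textbook form. The hypothesis $\mathbb{P}(\mathcal{B}_i)\leqslant\frac{1}{2}$ is the standard condition used to absorb the factor $1/(1-\max_i\mathbb{P}(\mathcal{B}_i))$ that appears in the general statement into the clean exponent $\Delta^\prime$ written in the lemma. Since every ingredient is already established in~\cite{alon08}, the proof is essentially a bookkeeping exercise: one verifies that the family $\{\mathcal{B}_i\}$ consists of increasing events on an independent product space and that the declared relation $\sim$ covers the genuine dependencies. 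There is no substantial obstacle here; the lemma is just an adaptation of a known statement to the paths-between-edges-of-a-matching setting used in later sections, and the actual work of the paper lies in later verifying the side conditions $\mathbb{P}(\mathcal{B}_i)\leqslant\frac12$ and computing $\Delta^\prime$ in the regime $d^{k-1}=o(n)$.
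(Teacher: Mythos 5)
The paper does not prove Lemma~2.3 at all; it is presented as a quotation from Alon--Spencer, with only the remark that ``the statements have been adapted slightly\ldots in terms of our settings.'' Your proposal is a correct and faithful account of exactly that adaptation: the $\mathcal{B}_i$ are up-sets in the independent Bernoulli edge variables, the relation $P'_i\cap P'_j\neq\emptyset$ is a valid dependency superset, the lower bound $U'\leqslant\mathbb{P}[\cap_i\overline{\mathcal{B}_i}]$ is Harris/FKG, and the hypothesis $\mathbb{P}(\mathcal{B}_i)\leqslant\tfrac12$ converts the $\tfrac{1}{1-\max_i\mathbb{P}(\mathcal{B}_i)}$ factor in the textbook exponent into the clean $\Delta'$ appearing here. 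This is the same (implicit) route the paper takes, so nothing further is needed.
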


 We make use of these inequalities to show some probability inequalities of  distances in
 $G\in\mathcal{G}(n,p)$.
\begin{lemma}
Let $k\geqslant2$ be a fixed integer, $G\in \mathcal{G}(n,p)$, $d=np$ satisfying
$d\geqslant c$ and $d^{k-1}=o(n)$ for some large enough constant $c$. 
Define $p_d$ to satisfy the equation $np_{d}=d^{k-1}$.

\noindent{\rm (a)}\ Given two vertices $u$ and $v$ in $[n]$, let $d_{G}(u,v)$
denote the distance between  $u$ and $v$ in $G$.
Then,
\begin{equation*}
\begin{aligned}[b]
\mathbb{P}[d_{G}(u,v)\geqslant k]&=\begin{cases}1-p,&k=2;\\(1-p_{d})
\exp[{O(n^{k-3}p^{k-2}+n^{2k-4}p^{2k-2})}],&k\geqslant3.
\end{cases}
\end{aligned}
\end{equation*}

\noindent{\rm (b)}\  Let ${M}$ be a matching with size $m$  in  $K_n$ and
$m=O\bigl( \frac{n\log d}{d^{k-1}}\bigr)$.
Then,
\begin{equation*}
\begin{aligned}[b]
&\mathbb{P}[{M} \ {\rm is\ a}\  k{\mbox-}{\rm matching\ in}\
G]\\&=\begin{cases}p^m(1-p)^{4 \binom{m}{2}},&k=2;\\p^m(1-p_{d})^{4 \binom{m}{2}}\exp[O(m^2n^{k-3}p^{k-2}+m^2n^{2k-4}p^{2k-2}+m^3n^{2k-5}p^{2k-3})],&k\geqslant3.
\end{cases}
\end{aligned}
\end{equation*}
\end{lemma}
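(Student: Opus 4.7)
The plan is to prove (a) and (b) by applying Janson's inequality (Lemmas~2.2 and~2.3 respectively) to the family of events indexed by short paths, then carry out careful Taylor-expansion estimates of $U,\Delta$ (resp. $U',\Delta'$). The case $k=2$ of part (a) is immediate since $d_G(u,v)\geqslant 2$ is equivalent to $uv\notin G$, which has probability $1-p$.

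For part (a) with $k\geqslant 3$, I first verify the hypothesis $\mathbb{P}[\mathcal{A}_i]\leqslant 1/2$ of Lemma~2.2: $\mathbb{P}[\mathcal{A}_i]=p^{\mathrm{length}(P_i)}\leqslant p$, and $d^{k-1}=o(n)$ forces $p=o(1)$. The key estimate is
\begin{equation*}
\sum_{i=1}^{l_1}\mathbb{P}[\mathcal{A}_i]=p+\sum_{i=2}^{k-1}(n-2)\cdots(n-i)\,p^i=p_d+O(n^{k-3}p^{k-2}),
\end{equation*}
the dominant term $p_d=n^{k-2}p^{k-1}$ coming from length-$(k-1)$ paths and the lower-order terms forming a geometric-like series with ratio $np=d$. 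Using $-\log(1-x)=x+O(x^2)$ for $x=o(1)$, one gets $-\log U=\sum_i\mathbb{P}[\mathcal{A}_i]+O\bigl(\sum_i\mathbb{P}[\mathcal{A}_i]^2\bigr)=p_d+O(n^{k-3}p^{k-2})+O(p_d^2)$, while $-\log(1-p_d)=p_d+O(p_d^2)$; these together yield $U=(1-p_d)\exp[O(n^{k-3}p^{k-2}+n^{2k-4}p^{2k-2})]$. The Janson correction $\Delta=\sum_{i\sim j}p^{|E(P_i)\cup E(P_j)|}$ is also $O(n^{2k-4}p^{2k-2})$ by enumerating edge-intersecting pairs of paths, and is thus absorbed into the above exponential factor.

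For part (b), I adopt a parallel strategy. First write
\begin{equation*}
\mathbb{P}[M\text{ is a }k\text{-matching in }G]=\mathbb{P}[M\subset G]\cdot\mathbb{P}[\text{no path of length }\leqslant k-1\text{ joins distinct }M\text{-edges}\mid M\subset G];
\end{equation*}
the first factor equals $p^m$. For the second, apply Lemma~2.3 to the $l_2$ connecting paths. That such a path may internally use another $M$-edge is handled by restricting, for each pair $\{e,e'\}\subset M$, to connecting paths avoiding $M\setminus\{e,e'\}$, and absorbing the sub-dominant correction into the error. Computing $U'$ yields the factor $(1-p_d)^{4\binom{m}{2}}$ together with Taylor error $\exp[O(m^2 n^{k-3}p^{k-2}+m^2 n^{2k-4}p^{2k-2})]$, the $m^2$ arising from $\binom{m}{2}$ pairs of $M$-edges times the $4$ endpoint choices per pair. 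Computing $\Delta'$ splits into two regimes: pairs of paths between the same pair of $M$-edges are already absorbed into the $U'$ error above, while pairs of paths between two distinct pairs of $M$-edges sharing a common $M$-endpoint contribute $O(m^3 n^{2k-5}p^{2k-3})$. These combine to match the three error exponents in the stated formula.

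The main obstacle is the combinatorial bookkeeping in $\Delta'$ for part (b): one must enumerate pairs of short paths across possibly different pairs of $M$-edges, carefully tracking shared edges and coincident vertices, to obtain the precise $m^3 n^{2k-5}p^{2k-3}$ term. A secondary subtlety is justifying that, under $m=O(n\log d/d^{k-1})$ and $d^{k-1}=o(n)$, all error exponents stay genuinely $o(1)$, so the exponential factors are meaningful corrections; this uses $p_d\to 0$ and the smallness of the lower-order contributions.
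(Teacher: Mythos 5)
Your proof follows essentially the same route as the paper: set up Janson's inequality via Lemmas~2.2/2.3 and Taylor-expand $U,\Delta$ (resp.\ $U',\Delta'$), with the $k=2$ cases being immediate. Your bound $\Delta = O(n^{2k-4}p^{2k-2})$ is a factor $d$ cruder than the paper's $O(n^{2k-5}p^{2k-3})$ but still lands inside the stated error term, and your explicit remark about connecting paths that internally traverse other $M$-edges --- which is needed to justify the factorization $\mathbb{P}[M\text{ is a }k\text{-matching}] = p^m\,\mathbb{P}[\cap_i\overline{\mathcal{B}_i}]$ and which the paper silently glosses over --- is a worthwhile clarification rather than a deviation.
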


\begin{proof} (a)\  For $k=2$, it is obviously true because $\mathbb{P}[d_{G}(u,v)\geqslant 2]=1-p$.
For $k\geqslant 3$, we consider $\Delta$ and $U$  in Lemma 2.2 because
$\mathbb{P}[d_{G}(u,v)\geqslant
k]=\mathbb{P}[\cap_{i=1}^{l_1} \overline{\mathcal{A}_i}]$.
Note that  $p=d/n=o(1)$ and $k\geqslant 3$ a fixed integer, then  we have
\begin{align}
U&=(1-p)\prod\limits_{i=2}^{k-1}(1-p^i)^{(n-2)\cdots(n-i)}\notag\\
&=(1-p)\prod\limits_{i=2}^{k-1}\Bigl[1-(n-2)\cdots(n-i)p^i+O\bigl(n^{2i-2}p^{2i}\bigr)\Bigr]\notag\\
&=\Bigl[1-p-\sum_{i=2}^{k-1}(n-2)\cdots(n-i)p^i\Bigr]\cdot\Bigl[1+O\bigl(n^{2k-4}p^{2k-2}\bigr)\Bigr],
\end{align}
where
\begin{align*}
(1-p^i)^{(n-2)\cdots(n-i)}=1-(n-2)\cdots(n-i)p^i+O\bigl(n^{2i-2}p^{2i}\bigr)
\end{align*}
is true because Taylor's expansion and $n^{i-1}p^i=o(1)$ when $d^{k-1}=o(n)$  and $2\leqslant i\leqslant k-1$;
the last equality is true because $n^{2k-4}p^{2k-2}$ is the largest term in $n^{2i-2}p^{2i}$
when $2\leqslant i\leqslant k-1$ and $d\geqslant c$ for some large enough constant $c$.

Since
\begin{align*}
p+\sum_{i=2}^{k-1}(n-2)\cdots(n-i)p^i=\sum_{i=1}^{k-1} n^{i-1}p^i+O\bigl(n^{k-3}p^{k-1}\bigr)
\end{align*}
and $n^{k-3}p^{k-1}=o(n^{2k-4}p^{2k-2})$, by the equation in (2.1), it follows that
\begin{align}
U
&=\bigl(1-\sum_{i=1}^{k-1} n^{i-1}p^i\bigr)\cdot\Bigl[1+O\bigl(n^{2k-4}p^{2k-2}\bigr)\Bigr]\notag\\
&=\bigl(1-\sum_{i=1}^{k-1} n^{i-1}p^i\bigr)\exp\bigl[O\bigl(n^{2k-4}p^{2k-2}\bigr)\bigr],
\end{align}
where
\begin{align*}
\exp\bigl[O\bigl(n^{2k-4}p^{2k-2}\bigr)\bigr]=1+O\bigl(n^{2k-4}p^{2k-2}\bigr)
\end{align*} is true because Taylor's expansion 
and $n^{2k-4}p^{2k-2}=d^{2k-2}/n^2=o(1)$ when $d^{k-1}=o(n)$.

By $d=np$ and $d^{k-1}=np_{d}$, we have
\begin{align*}
1-\sum_{i=1}^{k-1} n^{i-1}p^i &= 1- \frac{p[1-(np)^{k-1}]}{1-np}=
1- \frac{p(1-d^{k-1})}{1-d}= 1+ \frac{\frac{1}{n}-p_d}{1- \frac{1}{d}}
\end{align*}
in (2.2). Note that
\begin{align*}
1+ \frac{\frac{1}{n}-p_d}{1- \frac{1}{d}}
&= 1 + \Bigl(\frac{1}{n}-p_d\Bigr)\Bigl(1+O\Bigl( \frac{1}{d}\Bigr)\Bigr)=1-p_{d}+O\bigl(n^{k-3}p^{k-2}\bigr)
\end{align*}
because $d\geqslant c$ for some large enough constant $c$ and $O(p_d/d)=O(n^{k-3}p^{k-2})$. Thus, we obtain
\begin{align}
1-\sum_{i=1}^{k-1} n^{i-1}p^i
&=(1-p_d)\exp\bigl[O\bigl(n^{k-3}p^{k-2}\bigr)\bigr].
\end{align}
By  the equations in (2.2) and (2.3),  we finally have
\begin{align}
U
&=\bigl(1-p_{d}\bigr)\exp\bigl[O\bigl(n^{k-3}p^{k-2}+n^{2k-4}p^{2k-2}\bigr)\bigr].
\end{align}

We need to bound $\Delta=\sum_{i\sim j}\mathbb{P}[\mathcal{A}_i\cap \mathcal{A}_j]$ in Lemma 2.2 below.
Assume that
the paths $P_i$ and $P_j$ between $u$ and $v$ are of lengths $\ell_i$ and $\ell_j$
satisfying $2\leqslant\ell_i\leqslant \ell_j\leqslant k-1$ and the size of common edges of them is denoted
by $t\geqslant 1$.  Note that $\Delta=\sum_{i\sim j}\mathbb{P}[\mathcal{A}_i\cap \mathcal{A}_j]=
\sum_{i\sim j}\mathbb{P}[\mathcal{A}_j| \mathcal{A}_i]\cdot\mathbb{P}[\mathcal{A}_i]$.
The ways to choose $P_i$ in $K_n$ are at most $n^{\ell_i-1}$ and $\mathbb{P}[\mathcal{A}_i]=p^{\ell_i}$.
Fix the path $P_i$, for a given set of
 $t$ edges in $P_i$,
then the ways to choose $P_j$ in $K_n$ are at most $n^{\ell_j-t-1}$ and $\mathbb{P}[\mathcal{A}_j| \mathcal{A}_i]=p^{\ell_j-t}$.
 Thus, we have
\begin{align}
\Delta=
\sum_{i\sim j}\mathbb{P}[\mathcal{A}_j| \mathcal{A}_i]\cdot\mathbb{P}[\mathcal{A}_i]&\leqslant \sum_{\ell_i=2}^{k-1}n^{\ell_i-1}p^{\ell_i}\sum_{\ell_j=\ell_i}^{k-1}
\sum_{t=1}^{\ell_i-1}\binom{\ell_i}{t}n^{\ell_j-t-1}p^{\ell_j-t}.
\end{align}
Firstly, for fixed $t$, the geometric  series $\sum_{\ell_j=\ell_i}^{k-1}
\binom{\ell_i}{t}n^{\ell_j-t-1}p^{\ell_j-t}=O(n^{k-t-2}p^{k-t-1})$ because
the sum of this expression over $\ell_j\geqslant \ell_i$ is bounded by
an increasing geometric series with common ratio $d=np$ dominated by the term when
$\ell_j=k-1$. Secondly,  the geometric  series $\sum_{t=1}^{\ell_i -1}
n^{k-t-2}p^{k-t-1}=O(n^{k-3}p^{k-2})$ because
the sum of this expression over $t\geqslant 1$ is bounded by
a decreasing geometric series with common ratio $1/d$ dominated by the term when
$t=1$. Thus, for any fixed integer $k\geqslant 3$ and $d\geqslant c$
for some large enough constant $c$, we have 
\begin{align*}
\sum_{\ell_j=\ell_i}^{k-1}
\sum_{t=1}^{\ell_i-1}\binom{\ell_i}{t}n^{\ell_j-t-1}p^{\ell_j-t}=O\bigl(n^{k-3}p^{k-2}\bigr).
\end{align*}
At last, the sum of the expression $\sum_{\ell_i=2}^{k-1}n^{\ell_i-1}p^{\ell_i}$ over $\ell_i\geqslant 2$ is bounded by
an increasing geometric series with common ratio $d=np$ dominated by the term when
$\ell_i=k-1$, thus $\sum_{\ell_i=2}^{k-1}n^{\ell_i-1}p^{\ell_i}=O(n^{k-2}p^{k-1})$.
By the equation in (2.5),
we  have
\begin{align}
\Delta=O\bigl(n^{2k-5}p^{2k-3}\bigr).
\end{align}
Since $n^{2k-5}p^{2k-3}=o(n^{k-3}p^{k-2})$ when $k\geqslant 3$ and $d^{k-1}=o(n)$,
combining the equations in (2.4), (2.6) and Lemma 2.2,  we have
\begin{align*}
\mathbb{P}[d_{G}(u,v)\geqslant
k]=(1-p_{d})\exp\bigl[{O\bigl(n^{k-3}p^{k-2}+n^{2k-4}p^{2k-2}\bigr)}\bigr].
\end{align*}

(b)\ Firstly, there exists a penalty factor $p^{m}$ in the final
count because the edges of ${M}$ are present in $G\in \mathcal{G}(n,p)$.
For $k=2$,  $\mathbb{P}[{M} \ {\rm is\ a}\  k{\mbox-}{\rm matching\ in}\
G]=p^m(1-p)^{4 \binom{m}{2}}$.
For $k\geqslant 3$, we consider $\Delta^\prime$ and $U^\prime$
 in Lemma 2.3 because $ \mathbb{P}[{M} \ {\rm is\ a}\  k{\mbox-}{\rm matching\ in}\
G]=p^m\mathbb{P}[\cap_{i=1}^{l_2} \overline{\mathcal{B}}_i]$.
Following the same discussions between the equation in (2.2) and the equation in (2.4), we have
\begin{align}
U^\prime&=\Bigl[(1-p)\prod\limits_{i=2}^{k-1}(1-p^i)^{(n-4)\cdots(n-i-2)}\Bigr]^{4 \binom{m}{2}}\notag\\
&=\bigl(1-\sum\limits_{i=1}^{k-1} n^{i-1}p^i\bigr)^{4 \binom{m}{2}}\exp\bigl[O(m^2n^{2k-4}p^{2k-2})\bigr]\notag\\
&=\bigl(1-p_{d}\bigr)^{4 \binom{m}{2}}\exp\bigl[O\bigl(m^2n^{k-3}p^{k-2}+m^2n^{2k-4}p^{2k-2}\bigr)\bigr].
\end{align}

Now we  bound $\Delta^\prime=\sum_{i\sim j}\mathbb{P}[\mathcal{B}_i\cap \mathcal{B}_j]$  in Lemma 2.3 below. Similarly,
assume that one path between two  edges in $M$, denoted by $P_i'$, is of length $\ell_i$ satisfying
 $2\leqslant\ell_i\leqslant k-1$. Let $P_j'$ be another path of length $\ell_j$ between two edges in $M$
with $2\leqslant\ell_j\leqslant k-1$.
Let the size of common edges of the paths $P_i'$ and $P_j'$ be denoted
by $t$ with $t\geqslant 1$. The ways to choose $P_i'$ in $K_n$ are at most $4 \binom{m}{2}n^{\ell_i-1}$
and $\mathbb{P}[\mathcal{B}_i]=p^{\ell_i}$.
Fix the path $P_i'$, for a given set of
 $t$ edges in $P_i'$,
then the ways to choose $P_j'$ in $K_n$ are at most $4 \binom{m}{2}n^{\ell_j-t-2}$
if both end-vertices of the path  $P_j'$
 do not belong to the edges where the ones of the path $P_i'$ are, or at most
 $2mn^{\ell_j-t-1}$ if  one end-vertex of the path $P_j'$ belongs to the edges
 where one of the end-vertices of the path $P_i'$ is. By
$\mathbb{P}[\mathcal{B}_j| \mathcal{B}_i]=p^{\ell_j-t}$
 and $4 \binom{m}{2} n^{\ell_j-t-2}p^{\ell_j-t}<2mn^{l_j-t-1}p^{l_j-t}$ when
 $m=O( \frac{n\log d}{d^{k-1}})$,  we have
\begin{align}
\Delta^\prime=\sum_{i\sim j}\mathbb{P}[\mathcal{B}_j|\mathcal{B}_i]
\cdot\mathbb{P}[\mathcal{B}_i]&\leqslant \sum\limits_{\ell_i=2}^{k-1}4 \binom{m}{2}
n^{\ell_i-1}p^{\ell_i}\sum\limits_{\ell_j=\ell_i}^{k-1}
\sum\limits_{t=1}^{\ell_i-1} \binom{\ell_i}{t}2mn^{\ell_j-t-1}p^{\ell_j-t}. 
\end{align}
Likewise, following the
same discussions between the equation in (2.5) and the equation in (2.6),
for any fixed integer $k\geqslant 3$, $d\geqslant c$ for some large enough constant $c$
and $p=o(1)$, we have
\begin{align*}
\sum_{\ell_j=\ell_i}^{k-1}
\sum_{t=1}^{\ell_i-1}\binom{\ell_i}{t}2mn^{\ell_j-t-1}p^{\ell_j-t}=O\bigl(mn^{k-3}p^{k-2}\bigr)
\end{align*}
because the main term in this series is when
$\ell_j=k-1$ and $t=1$. By the equation in (2.8), we further have
\begin{align}
\Delta^\prime=O\bigl(m^3n^{2k-5}p^{2k-3}\bigr)
\end{align}
because the main term in $\sum_{\ell_i=2}^{k-1}4 \binom{m}{2}
n^{\ell_i-1}p^{\ell_i}$ is $4 \binom{m}{2}
n^{k-2}p^{k-1}$ when $\ell_i=k-1$.
By the equations in (2.7), (2.9) and Lemma 2.3, the proof of (b) is complete.
\end{proof}

\begin{lemma}Let $k\geqslant2$ a fixed integer, $d=np\geqslant c$ and $d^{k-1}=o(n)$
for some large enough constant $c$. Define $p_d$ to satisfy
 the equation $np_{d}=d^{k-1}$ and %
\begin{equation*}
f(x)= \frac{1}{\sqrt{2\pi x}}\Bigl( \frac{ed n}{2x}\Bigr)^{x}(1-p_{d})^{2x(x-1)}.
\end{equation*}
Then $f(x)$ is increasing in $x$ when $1\leqslant x\leqslant  \frac{(k-1)n\log d}{4d^{k-1}}$.
\end{lemma}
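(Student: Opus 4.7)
The plan is to treat $x$ as a continuous real variable and show that $g(x):=\log f(x)$ is strictly increasing on $[1,x_0]$, where $x_0:=\frac{(k-1)n\log d}{4d^{k-1}}$; this is equivalent to $f$ itself being increasing. First I would write
\begin{align*}
g(x)=-\tfrac{1}{2}\log(2\pi x)+x\log\bigl(edn/(2x)\bigr)+2x(x-1)\log(1-p_d)
\end{align*}
and compute
\begin{align*}
g'(x)=-\tfrac{1}{2x}+\log\bigl(dn/(2x)\bigr)+(4x-2)\log(1-p_d),
\end{align*}
where the ``$e$'' inside the first logarithm conveniently absorbs the $-1$ coming from $\frac{d}{dx}\bigl(x\log(1/x)\bigr)$.

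Next I would show that $g'$ is itself decreasing on $[1,\infty)$, so that its minimum on $[1,x_0]$ is attained at the right endpoint $x_0$. A direct second differentiation gives
\begin{align*}
g''(x)=\tfrac{1}{2x^2}-\tfrac{1}{x}+4\log(1-p_d),
\end{align*}
which is negative for every $x\ge 1$, since $\tfrac{1}{2x^2}-\tfrac{1}{x}=\frac{1-2x}{2x^2}<0$ and $\log(1-p_d)<0$. Hence verifying $g'(x_0)>0$ suffices.

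For the endpoint estimate I would use the defining identity $np_d=d^{k-1}$, which yields the clean identity $4x_0p_d=(k-1)\log d$, together with the expansion $\log(1-p_d)=-p_d(1+O(p_d))$ that is valid because $p_d=d^{k-1}/n=o(1)$ under the hypothesis $d^{k-1}=o(n)$. Substituting, the main terms collapse to
\begin{align*}
g'(x_0)=\log\Bigl(\frac{2d^{k}}{(k-1)\log d}\Bigr)-(k-1)\log d+o(\log d)=\log\Bigl(\frac{2d}{(k-1)\log d}\Bigr)+o(\log d),
\end{align*}
which is strictly positive once $d\ge c$ is sufficiently large, because $d$ grows strictly faster than $(k-1)\log d/2$.

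The only mildly delicate point, and the step I would flag as the main obstacle, is ensuring that the two error contributions --- namely $-\tfrac{1}{2x_0}$ (at most $1/2$ in absolute value) and the quadratic remainder $(4x_0-2)\cdot O(p_d^2)$ from the expansion of $\log(1-p_d)$ --- are genuinely $o(\log d)$ and so do not eat into the positive margin above. The second of these reduces to $4x_0 p_d^2=(k-1)p_d\log d=o(\log d)$, which follows from $p_d=o(1)$; this is also precisely the place where the sparsity hypothesis $d^{k-1}=o(n)$ does real work. The rest is elementary bookkeeping.
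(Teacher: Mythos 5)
Your proposal is correct and follows essentially the same route as the paper: take $\log f$, differentiate, observe that the derivative (your $g'$, the paper's $g$) is itself decreasing because its derivative $\tfrac{1}{2x^2}-\tfrac{1}{x}+4\log(1-p_d)$ is negative, and then verify positivity at the right endpoint using $4x_0 p_d=(k-1)\log d$ and $\log(1-p_d)\sim -p_d$. Your bookkeeping of the error as $o(\log d)$ against a main positive term of size $\Theta(\log d)$ is if anything slightly more careful than the paper's $o(1)$ claim, and the conclusion holds either way.
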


\begin{proof}\quad
Given $n$ and $d$ satisfying the assumption, take the logarithm to $f(x)$.
Differentiate $\log(f(x))$ on $x$. Thus, we have
\begin{align}
f'(x)= f(x)\Bigl[- \frac{1}{2x}+\log\Bigl( \frac{ed n}{2x}\Bigr)-1+(4x-2)\log(1-p_{d})\Bigr].
\end{align}
Let
\begin{align}
g(x)=- \frac{1}{2x}+\log\Bigl( \frac{ed n}{2x}\Bigr)-1+(4x-2)\log(1-p_{d}).
\end{align}
Note that
\begin{align*}
g^\prime(x)&= \frac{1}{2x^2}- \frac{1}{x}+ 4\log(1-p_{d})<0
\end{align*}
because $x\geqslant  1$ and $\log(1-p_{d})\sim -p_d=- d^{k-1}/n$ when $p_d=o(1)$,
then we have
$g(x)$ is a decreasing function in $x$. The function $g(x)$ takes its
minimum when $x= \frac{(k-1)n\log d}{4d^{k-1}}$, that is
 \begin{align}
 g(x)&\geqslant g\Bigl( \frac{(k-1)n\log d}{4d^{k-1}}\Bigr)\notag\\
 &=- \frac{2d^{k-1}}{(k-1)n \log d}+\log \Bigl( \frac{2e d^k}{(k-1)\log d}\Bigr)
 -1+\Bigl( \frac{(k-1)n\log d}{d^{k-1}}-2\Bigr)\log(1-p_{d})\notag\\
 &=- \frac{2d^{k-1}}{(k-1)n \log d}+\log  \frac{2}{(k-1)}+k\log d-\log\log d\notag
 \\&\quad\quad+\Bigl( \frac{(k-1)n\log d}{d^{k-1}}-2\Bigr)\log(1-p_{d}),
 \end{align}
where $\frac{d^{k-1}}{(k-1)n \log d}=o(1)$ by $d^{k-1}=o(n)$, and
\begin{align}
&\Bigl( \frac{(k-1)n\log d}{d^{k-1}}-2\Bigr)\log(1-p_{d})\notag\\
&\sim - \frac{(k-1)n\log d}{d^{k-1}}p_{d}\notag\\
&=- {(k-1)\log d}
\end{align}by  $\log(1-p_{d})\sim -p_d$ and $np_d=d^{k-1}$.
Then we further have
\begin{align*}
 g(x)&\geqslant  \log \frac{2}{k-1}+\log d-\log\log d+ o(1)>0
 \end{align*}
 because $d\geqslant c$ for some large enough constant $c$.
It follows that $f'(x)>0$ from the equations in (2.10) and (2.11).

 The proof of Lemma~2.5 is complete.
\end{proof}

\begin{remark}It is necessary to mention that we can not show the monotonicity of $f(x)$ when
$x\leqslant  \frac{kn\log d}{4d^{k-1}}$ based on the proof of Lemma~2.5
because $g(x)\sim \log \frac{2}{k}-\log\log d<0$ when $x=  \frac{kn\log d}{4d^{k-1}}$
and $d\geqslant c$ for some large enough constant $c$.
\end{remark}






\vskip20pt
\section{Upper bound  in Theorem~1.1} %

Kang and Manggala~\cite{kang12}
showed an upper bound of $um_k(G)$ when the expected degree $d=np$ is a  large enough constant $c$,
which improves the result in~\cite{andr04}. 
In fact, we generalize the range of the expected degree $d=np$ based on their proof from $d=c$ to
$d\geqslant c$ and  $d^{k-1}=o(n)$ for some large enough constant $c$ with better analysis.

Let $\mathcal{M}$ be the set of matchings with size $m$ in  $K_n$,  $M_i\in \mathcal{M}$
for $1\leqslant i\leqslant t$ be all matchings in $\mathcal{M}$, where
\begin{align*}
t={n\choose 2m}{2m\choose
{2,\cdots,2}}\frac{1}{m!}.
\end{align*}
Let $I_i$ be the indicator
random variable of the event that
${M}_i$ is a $k$-matching in $G$. Clearly, $X_m=\sum_{i=1}^tI_i$. Thus, we  have
\begin{align}
\mathbb{E}[X_m]&= \binom{n}{2m} \binom{2m}{{2,\cdots,2}} \frac{1}{m!}\mathbb{P}\bigl[{M_i}\ {\rm is\ a\
} k{\mbox-}{\rm
matching}\bigr].
\end{align} 

\begin{proof}[Proof of Upper Bound in Theorem 1.1]\ Let
\begin{align}
m= \bigl(1+o(1)\bigr)\frac{k n \log d}{2d^{k-1}}.
\end{align}
It is easy to verify that $m^2n^{k-3}p^{k-2}=o(m^3n^{2k-5}p^{2k-3})$
when $d\geqslant c$ for some large enough constant $c$.

By the equation in (3.1) and Lemma 2.4 (b),  we have
\begin{equation*}
\begin{aligned}[b]
\mathbb{E}[X_m]
&= \binom{n}{2m} \binom{2m}{{2,\cdots,2}} \frac{1}{m!}p^{m}(1-p_{d})^{4 \binom{m}{2}}
\exp\bigl[O(m^2n^{2k-4}p^{2k-2}+m^3n^{2k-5}p^{2k-3})\bigr]\\
&= \frac{n!}{m!(n-2m)!}\Bigl( \frac{p}{2}\Bigr)^m(1-p_{d})^{2m(m-1)}
\exp\bigl[O(m^2n^{2k-4}p^{2k-2}+m^3n^{2k-5}p^{2k-3})\bigr].
\end{aligned}
\end{equation*}
Note that $\frac{n!}{(n-2m)!}\sim n^{2m}$ when $m$ is in (3.2), $m!\sim \sqrt{2\pi m}(m/e)^m
$ when $m\rightarrow\infty$ by the Stirling formula,
 we further have
\begin{equation*}
\begin{aligned}[b]
\mathbb{E}[X_m]
&\sim \frac{n^{2m}}{\sqrt{2\pi m}}\Bigl(\frac{ep}{2m}\Bigr)^m(1-p_{d})^{2m(m-1)}
\exp\bigl[O(m^2n^{2k-4}p^{2k-2}+m^3n^{2k-5}p^{2k-3})\bigr]\\
&= \frac{1}{\sqrt{2\pi m}}\Bigl( \frac{epn^2}{2m}\Bigr)^m(1-p_{d})^{2m(m-1)}
\exp\bigl[O(m^2n^{2k-4}p^{2k-2}+m^3n^{2k-5}p^{2k-3})\bigr]\\
&= \frac{1}{\sqrt{2\pi m}}\Bigl( \frac{ed n}{2m}\Bigr)^m(1-p_{d})^{2m(m-1)}
\exp\bigl[O(m^2n^{2k-4}p^{2k-2}+m^3n^{2k-5}p^{2k-3})\bigr],
\end{aligned}
\end{equation*}
where the last equality is true because  $d=np$.
Note that $1-p_d\sim \exp[-p_d]$ for  $p_d=o(1)$,
\begin{equation*}
\begin{aligned}[b]
\mathbb{E}[X_m]
&\sim \frac{1}{\sqrt{2\pi
m}}\exp\Bigl[m\Bigl(\log\Bigl(\frac{ed n}{2m}\Bigr)-2(m-1)p_{d}
+O\bigl(mn^{2k-4}p^{2k-2}+m^2n^{2k-5}p^{2k-3}\bigr)\Bigr)\Bigr].
\end{aligned}
\end{equation*}
By the equation in (3.2), we have
\begin{align*}
O\bigl(mn^{2k-4}p^{2k-2}\bigr)=O\Bigl( \frac{d^{k-1}\log
d}{n}\Bigr),\quad
 O\bigl(m^2n^{2k-5}p^{2k-3}\bigr)=O\Bigl( \frac{\log^2
d}{d}\Bigr).
\end{align*}
Note that $d^{k-1}\log d=o(n)$ because
\begin{align*}
&(k-1)\log d +\log\log d\notag\\&= (k-1)\log d\cdot
\Bigl[1+ O\Bigl( \frac{\log\log d}{\log d}\Bigr) \Bigr]\notag\\&=o(\log n)
\end{align*}
is true when $d\geqslant c$ and $d^{k-1}=o(n)$ for some large enough constant $c$, then we have
$O(mn^{2k-4}p^{2k-2})=o(1)$ and $O(m^2n^{2k-5}p^{2k-3})=o(1)$.
It follows that
\begin{align}
\mathbb{E}[X_m]
&\sim \frac{1}{\sqrt{2\pi
m}}\exp\Bigl[m\Bigl(\log\Bigl(\frac{ed n}{2m}\Bigr)-2(m-1)p_{d}+o(1)\Bigr)\Bigr].
\end{align}

Likewise, by the equation in  (3.2), we also have
\begin{equation*}
\begin{aligned}[b]
\log\Bigl( \frac{ed n}{2m}\Bigr)&\sim k\log d-\log\log d,\\
 2(m-1)p_{d}&\sim k\log d;
\end{aligned}
\end{equation*}
and then
\begin{align*}
\log\Bigl( \frac{ed n}{2m}\Bigr)-2(m-1)p_{d}+o(1)\sim-\log\log d<0
\end{align*}
in  (3.3) such that $\mathbb{E}[X_m]\rightarrow 0$.
Thus, 
$\mathbb{P}[X_m>0]\rightarrow 0$, which implies
w.h.p., for any $k$-matching $M_k$,
\begin{equation*}
e(M_k)\leqslant \bigl(1+o(1)\bigr) \frac{k n \log d}{2d^{k-1}}.
\end{equation*}
The proof of the upper bound in Theorem 1.1 is complete.
\end{proof}

\section{Lower bound in Theorem~1.1}

In this section, we show a lower bound of 
the size of any maximal $k$-matching in Theorem~1.1.
Let $\mathcal{M}$ be the set of matchings with size $m$ in  $K_n$,  $M\in \mathcal{M}$
and $G\in \mathcal{G}(n,p)$.
Let $Y_m$ denote the
number of maximal $k$-matchings with size $m$ contained in $G$. Thus, we
have
\begin{align}
\mathbb{E}[Y_m]&=\binom{n}{2m} \binom{2m}{{2,\cdots,2}} \frac{1}{m!}\mathbb{P}\bigl[M\ {\rm is\ a\ maximal\
} k{\mbox-}{\rm
matching\ in\ }G\bigr].
\end{align}
For any given real number $\epsilon>0$, define
\begin{align}
m^*= (k-1-\epsilon) \frac{n\log d}{4d^{k-1}}.
\end{align}
In order to show the lower bound $e(M_k)\geqslant (1+o(1))\frac{(k-1)n \log d}{4d^{k-1}}$
for any maximal $k$-matching $M_k$ in Theorem~1.1 is true,
we only need to show $\mathbb{E}[\sum_{m\leqslant m^*}Y_m]\rightarrow 0$
when $n\rightarrow \infty$ because
\begin{align*}
{\mathbb P}\Bigl[\sum_{m\leqslant m^*}Y_m>0\Bigr]\leqslant
\mathbb{E}\Bigl[\sum_{m\leqslant m^*}Y_m\Bigr]
\end{align*}
by Markov's inequality. Then, for any maximal $k$-matching $M_k$ in $G$,
w.h.p., we have $e(M_k)> m^*$.

\begin{proof}[Proof of Lower bound in Theorem~1.1]\ Fix $m$ to be any positive integer satisfying
$m\leqslant m^*$. Let $M$ be a matching with size $m$ in $K_n$.
In fact, the theorem follows from the claim below.

\vskip 0.3cm

\noindent{\bf Claim 4.1 }\quad With the assumption in Theorem~1.1, define $p_d$ to satisfy the equation
$np_{d}=d^{k-1}$, then we have
\begin{equation*}
\begin{aligned}[b]
\mathbb{P}\bigl[M&\ {\rm is\ a\ maximal\
} k{\mbox-}{\rm
matching\ in\ }G\bigr]\\
&\leqslant2p^{m}(1-p_{d})^{4 \binom{m}{2}}\\
&{~~\qquad}\times\exp\Bigl[- \frac{n}{8}d^{-(k-1-\epsilon)}+
O\bigl(m^2n^{k-3}p^{k-2}+m^2n^{2k-4}p^{2k-2}+m^3n^{2k-5}p^{2k-3}\bigr)\Bigr].
\end{aligned}
\end{equation*}
We leave the proof of the above claim later. In fact, if the claim is true,
by the equation in (4.1), then we have
\begin{align}
\mathbb{E}[Y_m]
&<2 \binom{n}{2m} \binom{2m}{{2,\cdots,2}} \frac{1}{m!}p^{m}(1-p_d)^{4{m \choose 2}}\notag\\
&{~~\qquad}\times\exp\Bigl[- \frac{n}{8}d^{-(k-1-\epsilon)}+
O\bigl(m^2n^{k-3}p^{k-2}+m^2n^{2k-4}p^{2k-2}+m^3n^{2k-5}p^{2k-3}\bigr)\Bigr]\notag\\
&= \frac{2n!}{m!(n-2m)!}\Bigl( \frac{p}{2}\Bigr)^m(1-p_d)^{2m(m-1)}\notag\\
&{~~\qquad}\times\exp\Bigl[- \frac{n}{8}d^{-(k-1-\epsilon)}+
O\bigl(m^2n^{k-3}p^{k-2}+m^2n^{2k-4}p^{2k-2}+m^3n^{2k-5}p^{2k-3}\bigr)\Bigr]\notag\\
&< \frac{2n^{2m}}{m!}\Bigl( \frac{p}{2}\Bigr)^m(1-p_d)^{2m(m-1)}\notag\\
&{~~\qquad}\times\exp\Bigl[- \frac{n}{8}d^{-(k-1-\epsilon)}+
O\bigl(m^2n^{k-3}p^{k-2}+m^2n^{2k-4}p^{2k-2}+m^3n^{2k-5}p^{2k-3}\bigr)\Bigr].
\end{align}

By the Stirling formula, $m!\geqslant \sqrt{2\pi m}(m/e)^m
$ for all $m\geqslant 1$, the equation in (4.3) and $d=np$, we further have
\begin{align*}
\mathbb{E}[Y_m]
&< \frac{2n^{2m}}{\sqrt{2\pi m}}\Bigl( \frac{ep}{2m}\Bigr)^m(1-p_d)^{2m(m-1)}\notag\\
&{~~\qquad}\times\exp\Bigl[- \frac{n}{8}d^{-(k-1-\epsilon)}+
O\bigl(m^2n^{k-3}p^{k-2}+m^2n^{2k-4}p^{2k-2}+m^3n^{2k-5}p^{2k-3}\bigr)\Bigr]\notag\\
&= \frac{2}{\sqrt{2\pi m}}\Bigl( \frac{epn^2}{2m}\Bigr)^m(1-p_d)^{2m(m-1)}\notag\\
&{~~\qquad}\times\exp\Bigl[- \frac{n}{8}d^{-(k-1-\epsilon)}+
O\bigl(m^2n^{k-3}p^{k-2}+m^2n^{2k-4}p^{2k-2}+m^3n^{2k-5}p^{2k-3}\bigr)\Bigr]\notag\\
&= \frac{2}{\sqrt{2\pi m}}\Bigl( \frac{ed n}{2m}\Bigr)^m(1-p_d)^{2m(m-1)}\notag\\
&{~~\qquad}\times\exp\Bigl[- \frac{n}{8}d^{-(k-1-\epsilon)}+
O\bigl(m^2n^{k-3}p^{k-2}+m^2n^{2k-4}p^{2k-2}+m^3n^{2k-5}p^{2k-3}\bigr)\Bigr].
\end{align*}

Since $m^*< \frac{(k-1)n\log d}{4d^{k-1}}$  in (4.2),
by Lemma 2.5 and for all $1\leqslant m\leqslant m^*$,  it follows that
\begin{equation*}
 \frac{1}{\sqrt{2\pi m}}\Bigl( \frac{ed n}{2m}\Bigr)^{m}(1-p_d)^{2m(m-1)}
 \leqslant \frac{1}{\sqrt{2\pi m^*}}\Bigl( \frac{ed n}{2m^*}\Bigr)
 ^{m^*}(1-p_d)^{2m^*(m^*-1)},
\end{equation*} and
\begin{align}
\mathbb{E}[Y_m]
&<  \frac{2}{\sqrt{2\pi m^*}}\Bigl( \frac{ed n}{2m^*}\Bigr)^{m^*}(1-p_d)^{2m^*(m^*-1)}\notag\\
&{~~\qquad}\times\exp\Bigl[- \frac{n}{8}d^{-(k-1-\epsilon)}+
O\bigl({m^*}^2n^{k-3}p^{k-2}+{m^*}^2n^{2k-4}p^{2k-2}+{m^*}^3n^{2k-5}p^{2k-3}\bigr)\Bigr]\notag\\
&=  \frac{2}{\sqrt{2\pi m^*}}\Bigl( \frac{ed n}{2m^*}\Bigr)^{m^*}(1-p_d)^{2m^*(m^*-1)}\notag\\
&{~~\qquad}\times\exp\Bigl[- \frac{n}{8}d^{-(k-1-\epsilon)}+
O\bigl({m^*}^2n^{2k-4}p^{2k-2}+{m^*}^3n^{2k-5}p^{2k-3}\bigr)\Bigr],
\end{align}
where the last equality is true because ${m^*}^2n^{k-3}p^{k-2}=o({m^*}^3n^{2k-5}p^{2k-3})$ when
$m^*= (k-1-\epsilon) \frac{n\log d}{4d^{k-1}}$  in (4.2) and $d\geqslant c$
for some large enough constant $c$. For all  $1\leqslant m\leqslant m^*$,
adding the corresponding  inequalities in (4.4) together,
\begin{align*}
\mathbb{E}\biggl[\sum_{m\leqslant m^*}Y_m\biggr]
&< \frac{2m^*}{\sqrt{2\pi m^*}}\Bigl(\frac{ed n}{2m^*}\Bigr)^{m^*}(1-p_d)^{2m^*(m^*-1)}\notag\\
&{~~\qquad}\times\exp\Bigl[- \frac{n}{8}d^{-(k-1-\epsilon)}+
O\bigl({m^*}^2n^{2k-4}p^{2k-2}+{m^*}^3n^{2k-5}p^{2k-3}\bigr)\Bigr]\notag\\
&=\sqrt{ \frac{2m^*}{\pi}} \Bigl( \frac{ed n}{2m^*}\Bigr)
^{m^*}(1-p_d)^{2m^*(m^*-1)}\notag\\&
{~~\qquad}\times
\exp\Bigl[- \frac{m^*d^\varepsilon}{2(k-1-\epsilon)\log
d}+O\bigl({m^*}^2n^{2k-4}p^{2k-2}+{m^*}^3n^{2k-5}p^{2k-3}\bigr)\Bigr],
\end{align*}
where the last equality is true because
\begin{align*}
\exp\Bigl[- \frac{n}{8}d^{-(k-1-\epsilon)}\Bigr]&= \exp\biggl[- \frac{m^*d^\varepsilon}{2(k-1-\epsilon)\log
d}\biggr]
\end{align*}when $m^*= (k-1-\epsilon) \frac{n\log d}{4d^{k-1}}$  in (4.2).
By $1-p_d\sim \exp[-p_d]$ for $p_d=o(1)$, 
\begin{align}
\mathbb{E}\biggl[\sum_{m\leqslant m^*}Y_m\biggr]
&<\sqrt{ \frac{2m^*}{\pi}}\exp\biggl[m^*\Bigl(\log\Bigl( \frac{ed n}{2m^*}\Bigr)-2(m^*-1)p_d\notag\\
&{~~\qquad}- \frac{d^\epsilon}{2(k-1-\epsilon)\log
d}+O\bigl({m^*}n^{2k-4}p^{2k-2}+{m^*}^2n^{2k-5}p^{2k-3}\bigr)\Bigr)\biggr].
\end{align}
Again by $m^*=(k-1-\epsilon) \frac{n\log d}{4d^{k-1}}\rightarrow\infty$ in (4.2),
 $d^{k-1}=np_d$ and $d\geqslant c$  for some large enough constant $c$,
it follows that
\begin{align}
\log\Bigl( \frac{ed n}{2m^*}\Bigr)&=\log\Bigl( \frac{2ed^k}{(k-1-\epsilon)\log d}\Bigr)\sim
k\log d,\\
2(m^*-1)p_d&\sim \frac{1}{2}(k-1-\epsilon)\log d.
\end{align}Similarly, we also have
\begin{align}
{{m^*}^2}n^{2k-5}p^{2k-3}&=
\frac{(k-1-\epsilon)^2(\log d)^2}{16d}=o(1),\\
{m^*}n^{2k-4}p^{2k-2}&=
\frac{(k-1-\epsilon)d^{k-1}\log d}{4n}=o(1).
\end{align}

Putting the equations in (4.6)-(4.9) into the equation in (4.5),
we have
\begin{equation*}
\begin{aligned}[b]
&\log\Bigl( \frac{ed n}{2m^*}\Bigr)-2(m^*-1)p_d- \frac{d^\epsilon}{2(k-1-\epsilon)\log
d}+
O\bigl(m^*n^{2k-4}p^{2k-2}+{m^*}^2n^{2k-5}p^{2k-3}\bigr)<0
\end{aligned}
\end{equation*}
because $  \frac{d^\epsilon}{2(k-1-\epsilon)\log
d}$ dominates all these terms
when $d\geqslant c$  for some large enough constant $c$.
Finally, using the equation in (4.5), we have
\begin{align*}
\mathbb{E}\Bigl[\sum_{m\leqslant m^*}Y_m\Bigr]\rightarrow 0,
\end{align*}
and then
\begin{align*}
{\mathbb P}\Bigl[\sum_{m\leqslant m^*}Y_m>0\Bigr]\rightarrow 0
\end{align*}
by Markov's inequality. W.h.p. we have $e(M_k)>m^*$ for any maximal $k$-matching $M_k$ in $G$.

 The proof of the lower bound  in Theorem~1.1 is complete.\end{proof}

In order to formally finish the  proof of  Theorem~1.1,
it is necessary to prove  Claim~4.1.

\begin{proof}[Proof of Claim 4.1]\ Recall that  $m$ is a fixed positive integer satisfying
$m\leqslant m^*$ and $M$ is a matching with size $m$ in $K_n$. Let
$\Gamma_{\geqslant k}({M})$ denote the set of vertices in
$G\in \mathcal{G}(n,p)$  whose distances are at least $k$ to each vertex in ${M}$. If ${M}$ is a maximal $k$-matching
in $G$, then $\Gamma_{\geqslant k}({M})$ is either empty or an independent
set, otherwise it contradicts with the maximal property of ${M}$.
Define two events $\mathcal{E}$ and $\mathcal{F}$ as
\begin{align}
\mathcal{E}=\{\Gamma_{\geqslant k}({M})\ {\rm is\ an\
independent\ set}\}\quad {\rm{and}}\quad \mathcal{F}=\Bigl\{\bigl|\Gamma_{\geqslant
k}({M})\bigr|> \frac{n}{2}d^{-\frac{(k-1-\epsilon)}{2}}\Bigr\}.
\end{align}
By the total probability formula, we have
\begin{align*}
&\mathbb{P}\bigl[M\ {\rm is\ a\ maximal\
} k{\mbox-}{\rm
matching\ in\ }G\bigr]\notag\\
&=\mathbb{P}\bigl[M\ {\rm is\ a\ maximal\
} k{\mbox-}{\rm
matching\ in\ }G|\mathcal{F}\bigr]\cdot P\bigl[\mathcal{F}\bigr]\notag\\
&\qquad+\mathbb{P}\bigl[M\ {\rm is\ a\ maximal\
} k{\mbox-}{\rm
matching\ in\ }G|\mathcal{F}^c\bigr]\cdot P\bigl[\mathcal{F}^c\bigr],
\end{align*}
in which $\mathbb{P}[M\ {\rm is\ a\ maximal\
} k{\mbox-}{\rm
matching\ in\ }G|\mathcal{F}]\cdot P[\mathcal{F}]=\mathbb{P}[\{M\ {\rm is\ a\
} k{\mbox-}{\rm
matching\ in\ }G\}\cap \mathcal{E}\cap \mathcal{F}]$ by the definitions of the events
$\mathcal{E}$, $\mathcal{F}$ and the maximal property of ${M}$; and it is clear that
$\mathbb{P}[\{M\ {\rm is\ a\ maximal\
} k{\mbox-}{\rm
matching\ in\ }G\}\cap\mathcal{F}^c]\leqslant \mathbb{P}[\{M\ {\rm is\ a\
} k{\mbox-}{\rm
matching\ in\ }G\}\cap \mathcal{F}^c]$. It follows that
\begin{align*}
&\mathbb{P}\bigl[M\ {\rm is\ a\ maximal\
} k{\mbox-}{\rm
matching\ in\ }G\bigr]\notag\\
&\leqslant\mathbb{P}\bigl[\{M\ {\rm is\ a\
} k{\mbox-}{\rm
matching\ in\ }G\}\cap \mathcal{E}\cap \mathcal{F}\bigr]+\mathbb{P}\bigl[\{M\ {\rm is\ a\
} k{\mbox-}{\rm
matching\ in\ }G\}\cap \mathcal{F}^c\bigr]\\
&=\mathbb{P}[M\ {\rm is\ a\
} k{\mbox-}{\rm
matching\ in\ }G]\bigl(\mathbb{P}[\mathcal{E}\cap \mathcal{F}]+\mathbb{P}[\mathcal{F}^c]\bigr),
\end{align*}
where the last equality is true because the event $\{M\ {\rm is\ a\
} k{\mbox-}{\rm matching\ in\ }G\}$ is independent to the events $\mathcal{E}\cap \mathcal{F}$ and
$\mathcal{F}^c$.
Combining with Lemma 2.4 (b), we further have
\begin{align}
&\mathbb{P}[M\ {\rm is\ a\ maximal\
} k{\mbox-}{\rm
matching\ in\ }G]\notag\\
&\leqslant p^{m}(1-p_d)^{4\binom{m}{2}}\exp\bigl[{O(m^2n^{k-3}p^{k-2}+m^2n^{2k-4}p^{2k-2}+m^3n^{2k-5}p^{2k-3})}\bigr]\notag\\
&\qquad\times\bigl(\mathbb{P}[\mathcal{E}\cap \mathcal{F}]+\mathbb{P}[\mathcal{F}^c]\bigr).
\end{align}

Hence, in order to finish the proof of  Claim 4.1, it is necessary to show
\begin{align}
\mathbb{P}[\mathcal{F}^c]&<
\exp\Bigl[- \frac{n}{8}d^{-(k-1-\epsilon)}\Bigr],\\
\mathbb{P}[\mathcal{E}\cap \mathcal{F}]&<\exp\Bigl[- \frac{n}{8}d^{-(k-1-\epsilon)}\Bigr].
\end{align}

Firstly, we prove the inequality in (4.12).
 Fix a vertex $u$  not in $M$. By Lemma 2.4 (a), for any vertex $v$ in $M$,
 $\mathbb{P}[d_{G}(u,v)\geqslant
k]=1-p$ for $k=2$ and $\mathbb{P}[d_{G}(u,v)\geqslant
k]=(1-p_d)\exp[O(n^{k-3}p^{k-2}+n^{2k-4}p^{2k-2})]$ for $k\geqslant 3$. While
 for two different vertices $v_1$ and $v_2$ in $M$,
the events $\{d_{G}(u,v_1)\geqslant k\}$ and $\{d_{G}(u,v_2)\geqslant k\}$
are not independent.
The property of distance is
a decreasing property, which means that  if the event that $\{d_{G}(u,v_1)\geqslant k\}$ is true,
then the probability of the event $\{d_{G}(u,v_2)\geqslant k\}$
is at least the corresponding value in the case of independence.
Since there are exactly $2m$ vertices in $M$, we have the event $\{u\in \Gamma_{\geqslant k}({M})\}$
occurs with probability at least $P$, where
\begin{align}
P&=\begin{cases}(1-p)^{2m},&k=2;\\(1-p_{d})^{2m}\exp[{O(mn^{k-3}p^{k-2}+mn^{2k-4}p^{2k-2})}],&k\geqslant3.
\end{cases}
\end{align}
Hence, the probability of the event that the variable $|\Gamma_{\geqslant
k}({M})|$ is no greater than some value is dominated by
the event that the variable ${\rm{Bin}}(n-2m,P)$ is no greater than the same value, which means
\begin{align}
\mathbb{P}[\mathcal{F}^c]
&=\mathbb{P}\Bigl[|\Gamma_{\geqslant
k}({M})|\leqslant \frac{n}{2}d^{- \frac{(k-1-\epsilon)}{2}}\Bigr]\notag\\
&\leqslant \mathbb{P}\Bigl[{\rm{Bin}}(n-2m,P)\leqslant \frac{n}{2}d^{- \frac{(k-1-\epsilon)}{2}}\Bigr].
\end{align}

For any $1\leqslant m\leqslant m^*$ in (4.2),  $d\geqslant c$ for some large enough constant $c$, 
we have $n-2m\sim n$ and
\begin{align*}
mn^{k-3}p^{k-2}&=O\Bigl( \frac{\log d}{d}\Bigr)=o(1),\\
mn^{2k-4}p^{2k-2}&=O\Bigl( \frac{d^{k-1}\log d}{n}\Bigr)=o(1).
\end{align*}
Thus, we have $\exp[{O(mn^{k-3}p^{k-2}+mn^{2k-4}p^{2k-2})}]\sim 1$  and
$P\sim (1-p_d)^{2m}$ in (4.14). By the equation in (4.15),
\begin{align}
\mathbb{P}[\mathcal{F}^c]&
\leqslant \mathbb{P}\Bigl[{\rm{Bin}}(n-2m,P)\leqslant \frac{n}{2}d^{- \frac{(k-1-\epsilon)}{2}}\Bigr]\notag\\
&\sim \mathbb{P}\Bigl[{\rm{Bin}}(n,(1-p_d)^{2m})\leqslant \frac{n}{2}d^{- \frac{(k-1-\epsilon)}{2}}\Bigr].
\end{align}

The expectation of
${\rm{Bin}}(n,(1-p_d)^{2m})$ is
\begin{align}
\mathbb{E}\bigl[{\rm{Bin}}(n,(1-p_d)^{2m})\bigr]&\geqslant
 n\exp\Bigl[- \frac{2mp_d}{1-p_d}\Bigr]\notag\\
&\geqslant n\exp\Bigl[- \frac{2m^*p_d}{1-p_d}\Bigr]\notag\\
&= n\exp
\biggl[- \frac{ \frac{(k-1-\epsilon)}{2}\log d}{1- \frac{d^{k-1}}{n}}\biggr]\notag\\
&\sim nd^{- \frac{(k-1-\varepsilon)}{2}},
\end{align}
where the first inequality is true because $1-x\geqslant \exp\bigl[{- \frac{x}{1-x}}\bigr]$
for any $0<x<1$; the second inequality is true because $m\leqslant m^*$  in (4.2) and $p_d= d^{k-1}/n$.

Putting $\delta= \frac{1}{2}$ in Lemma 2.1, by the equations in (4.16) and (4.17), we have
\begin{align}
\mathbb{P}[\mathcal{F}^c]&<
\exp\Bigl[- \frac{n}{8} d^{- \frac{(k-1-\epsilon)}{2}}\Bigr]\notag\\
&<\exp\Bigl[- \frac{n}{8} d^{-(k-1-\epsilon)}\Bigr],
\end{align}
where the last inequality is obviously true when $k\geqslant 2$ and any real number $\epsilon>0$.
The proof of the equation in (4.12) is complete.

Secondly, we will prove the equation in (4.13)
to complete the proof of Claim 4.1.
Let $\mathcal{S}$ be the collection of
vertex subsets $S$ in $[n]$ satisfying
$|S|> \frac{n}{2} d^{- \frac{(k-1-\epsilon)}{2}}$,  which implies
\begin{equation*}
\mathcal{S}=\Bigl\{S\subset [n]\ \big|\ |S|> \frac{n}{2} d^{- \frac{(k-1-\epsilon)}{2}}\Bigr \}.
\end{equation*}
Note that
\begin{equation*}
\begin{aligned}[b]
\mathbb{P}[S\ {\rm is\ independent}]&=(1-p)^{ \frac{|S|(|S|-1)}{2}}.
\end{aligned}
\end{equation*}
Since $p= d/n=o(1)$, $1-p\sim \exp[-p]$ and $|S|> \frac{n}{2} d^{- \frac{(k-1-\epsilon)}{2}}
\rightarrow\infty$ when $d^{k-1}=o(n)$,
we  have
\begin{align}
\mathbb{P}[S\ {\rm is\ independent}]
&\sim\exp\Bigl[- \frac{|S|^2-|S|}{2}p\Bigr]\notag\\
&<\exp\Bigl[- \frac{n}{8} d^{-(k-1-\varepsilon)}\Bigr],
\end{align}
where the last inequality is true because
\begin{align*}
\frac{|S|^2-|S|}{2}p\sim\frac{|S|^2}{2}\cdot \frac{d}{n}> \frac{n}{8} d^{-(k-2-\varepsilon)}>
\frac{n}{8} d^{-(k-1-\varepsilon)}
\end{align*} when $|S|\rightarrow\infty$.
At last, by the equation in (4.19), we have
\begin{align*}
\mathbb{P}[\mathcal{E}\cap \mathcal{F}]=\mathbb{P}[\mathcal{E}|\mathcal{F}]
\mathbb{P}[\mathcal{F}]=\sum_{S\in \mathcal{S}}
\mathbb{P}\bigl[S\ {\rm is\ independent}\bigr]\cdot \mathbb{P}[\Gamma_{\geqslant
k}({M})=S],
\end{align*} then
\begin{equation*}
\begin{aligned}[b]
\mathbb{P}[\mathcal{E}\cap \mathcal{F}]&<\exp\Bigl[- \frac{n}{8} d^{-(k-1-\varepsilon)}\Bigr]\cdot\sum\limits_{S\in \mathcal{S}}
\mathbb{P}\bigl[\Gamma_{\geqslant k}({M})=S\bigr]\\
&<\exp\Bigl[- \frac{n}{8} d^{-(k-1-\varepsilon)}\Bigr].
\end{aligned}
\end{equation*}
The proof of the equation in (4.13) is complete.
\end{proof}

\begin{remark}
Why do we define the event $\mathcal{F}$ to be the equation in (4.10)?
In fact, it is not necessary to find the optimal coefficient.
The form of $\mathcal{F}$  mainly helps us obtain the same term $\exp[- \frac{n}{8}d^{-(k-1-\varepsilon)}]$
in (4.18) and (4.19) to finally prove the equations in (4.12) and (4.13).
\end{remark}

\section{Generation of one large $k$-matching}

In this section, by investigating the property for
some given number of vertices  in $G\in \mathcal{G}(n,p)$,
we design a randomized greedy algorithm to generate a $k$-matching
with  size  $(1+o(1))\frac{k n\log d}{4d^{k-1}}$ to finish the proof of Theorem 1.3.

Consider ``The large $k$-matching Generator" randomized greedy algorithm below.

\noindent{\textbf {Algorithm}}:\ The large $k$-matching Generator\\
{\bf Input}: A graph $G$ on the vertex $[n]$ with $n\rightarrow\infty$, $k\geqslant 2$,
$d\geqslant c$ and $d^{k-1}=o(n)$ for some large enough constant $c$.\\
{\bf Output}: A large $k$-matching of $G$.\\
{\bf Step 1}: Uniformly at random choose $2s$ vertices $S$ from $[n]$.
Pair them into $s$ pairs randomly and independently,
where $s= \frac{n}{4d^{k-1}}[k\log d-3\log (k\log d)]$. \\
{\bf Step 2}: If $S$ is not a $k$-matching in $G$, replace one pair by an edge in
$V(G)\setminus S$ at distance at least $k$ to $S$.\\
{\bf Step 3}: Output $S$ if $S$ is a $k$-matching in $G$. Otherwise, go to step 2.

We consider the efficiency of the above algorithm by investigating the property
for any given $2s$ vertices in $G\in \mathcal{G}(n,p)$, which
is guaranteed by the following Theorem 5.1.

\begin{theorem}
Let $k\geqslant2$ be a fixed integer,  $G\in \mathcal{G}(n,p)$  with the expected degree
 $d=np$ and $n\rightarrow\infty$, where $d\geqslant c$  and $d^{k-1}=o(n)$ for some large enough constant $c$.
  W.h.p., there are $ \frac{n}{d^{k/2}}(k\log d)^{3/2}$
 vertices  at distance at least $k$ to  every set of $2s$ vertices, where
 \begin{align}
 s= \frac{n}{4d^{k-1}}\bigl[k\log d-3\log (k\log d)\bigr].
 \end{align}
Moreover, these $ \frac{n}{d^{k/2}}(k\log d)^{3/2}$ vertices induce at least one edge in $G$.
\end{theorem}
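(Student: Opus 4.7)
Let $T := \frac{n}{d^{k/2}}(k\log d)^{3/2}$. The plan splits into two parts: (I) establish w.h.p.\ that every $S \subseteq [n]$ with $|S| = 2s$ satisfies $|\Gamma_{\geq k}(S)| \geq T$; (II) deduce that this vertex set spans at least one edge of $G$.

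For (I), fix a $2s$-subset $S$ and, for each $u \notin S$, estimate $\mathbb{P}[u \in \Gamma_{\geq k}(S)]$ by the same monotonicity/``decreasing property of distance'' argument that produced equation (4.14) in the proof of Claim~4.1, combined with Lemma~2.4(a). This yields
\begin{equation*}
\mathbb{P}[u \in \Gamma_{\geq k}(S)] \geq (1 - p_d)^{2s}\exp\bigl[O(sn^{k-3}p^{k-2} + sn^{2k-4}p^{2k-2})\bigr] = (1+o(1))(1-p_d)^{2s}
\end{equation*}
in the regime $d \geq c$, $d^{k-1} = o(n)$. The specific $s$ in the theorem is calibrated so that $2sp_d = \tfrac{1}{2}[k\log d - 3\log(k\log d)]$, hence $(1 - p_d)^{2s} \sim d^{-k/2}(k\log d)^{3/2}$ and $\mathbb{E}|\Gamma_{\geq k}(S)| = (1+o(1))T$. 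I would then concentrate $|\Gamma_{\geq k}(S)|$ around $T$: when $k = 2$ the indicators $\{\mathbf{1}[u \in \Gamma_{\geq 2}(S)]\}_{u \notin S}$ depend on pairwise-disjoint edge sets, so $|\Gamma_{\geq 2}(S)|$ is exactly a binomial and Lemma~2.1 delivers a lower tail of order $\exp[-\Omega(T)]$; for $k \geq 3$ a suitable concentration input (an edge-exposure martingale or a Kim--Vu-type polynomial concentration) yields sub-exponential lower-tail concentration of comparable order.

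Passing from a single $S$ to the uniform ``for every $S$'' statement is the \emph{main obstacle}. A union bound over the $\binom{n}{2s}$ candidate sets requires the Chernoff exponent to dominate $\log\binom{n}{2s} = \Theta\bigl(\frac{n(\log d)^2}{d^{k-1}}\bigr)$. Since the available exponent is $\Omega(T) = \Omega\bigl(\frac{n(\log d)^{3/2}}{d^{k/2}}\bigr)$, the ratio is of order $d^{k/2-1}/\sqrt{\log d}$: it tends to $\infty$ for $k \geq 3$ and the union bound closes immediately, but tends to $0$ in the borderline $k = 2$ case. For $k = 2$ the plain union bound must be replaced by a restriction to the family of $2s$-subsets actually \emph{reachable} during the algorithm (each iteration modifies $S$ by a single pair, so the family visited in $\mathrm{poly}(n)$ iterations from a uniformly random starting $2s$-subset has polynomial size in $n$); the union-bound cost then drops to $O(\log n) \ll T$ and uniformity over the relevant family is recovered.

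Finally, for (II), I fix an $S$ for which (I) holds and expose the edges of $G$ in two stages: first all edges lying within graph-distance $k-1$ of $S$ (which determines $\Gamma_{\geq k}(S)$), and then the remaining edges inside $\Gamma_{\geq k}(S)$. At the second stage these residual edges are fresh Bernoulli$(p)$ variables, independent of the first stage, and their expected count is at least $\binom{T}{2}p \sim \frac{n(k\log d)^3}{2d^{k-1}}$, which tends to infinity because $d^{k-1} = o(n)$. Chernoff then gives failure probability $\exp\bigl[-\Omega\bigl(\tfrac{n(\log d)^3}{d^{k-1}}\bigr)\bigr]$, exceeding the union-bound cost from (I) by an extra factor $\log d$. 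Hence w.h.p.\ every valid $S$ spans at least one edge inside $\Gamma_{\geq k}(S)$, completing the proof of Theorem~5.1.
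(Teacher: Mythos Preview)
The main gap is your concentration step for $|\Gamma_{\geq k}(S)|$ when $k \geq 3$. Neither tool you name works as stated. An edge-exposure martingale has no useful Lipschitz bound here: flipping a single edge $uv$ with $u \in \Gamma_{k-2}(S)$ pulls $v$ into $\Gamma_{k-1}(S)$, and this can cascade through the neighbours of $v$, evicting many vertices from $\Gamma_{\geq k}(S)$ at once. Kim--Vu requires the random variable to be a polynomial with nonnegative coefficients in the edge indicators, which $|\Gamma_{\geq k}(S)|$ is not. The paper's device, which you are missing, is a \emph{layer-by-layer BFS exposure} (its Claim~5.1): conditionally on the sets $\Gamma_0(S),\ldots,\Gamma_{i-1}(S)$, the count $|\Gamma_i(S)|$ is \emph{exactly} binomial with parameters $n-\sum_{j<i}|\Gamma_j|$ and $1-(1-p)^{|\Gamma_{i-1}|}$, because each remaining vertex lies in $\Gamma_i$ iff it has a neighbour in $\Gamma_{i-1}$, and those edge sets are disjoint across the candidate vertices. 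Iterated Chernoff then gives $|\Gamma_i(S)|\sim 2sd^i$ for $i\leq k-2$, and one more binomial step (now with success probability $(1-p)^{|\Gamma_{k-2}|}$) yields $|\Gamma_{\geq k}(S)|\sim A$. This replaces your appeal to Lemma~2.4(a) plus an unspecified concentration input by a chain of honest binomial tail bounds.

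Your part (II) and its union-bound arithmetic match the paper: conditionally on the layers, the edges inside $\Gamma_{\geq k}(S)$ are fresh, the probability they span no edge is $(1-p)^{\binom{A}{2}}\sim\exp\bigl[-\Theta(n(\log d)^3/d^{k-1})\bigr]$, and this beats $\binom{n}{2s}\leq\exp\bigl[O(n(\log d)^2/d^{k-1})\bigr]$ by a factor $\log d$ in the exponent, for all $k\geq 2$.

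Your observation about the $k=2$ union bound in part (I) is correct, and in fact the paper never takes that union bound: its equation~(5.11) establishes $|\Gamma_{\geq k}(S)|\sim A$ only for a \emph{fixed} $S$, and the uniform-over-$S$ union bound (equation~(5.14)) is applied solely to the edge-existence event $\mathcal{K}$, whose failure exponent carries the extra $\log d$ needed to absorb $\binom{n}{2s}$. So your proposed trajectory-restriction workaround for $k=2$ is addressing an issue the paper's argument simply does not confront.
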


\begin{proof}[Proof of Theorem 5.1]\ We prove Theorem 5.1 by two claims in the following.
Let $S\subseteq [n]$ be a vertex subset with $|S|=2s$. Let  $\Gamma_{i}(S)$ be the
set of vertices satisfying
\begin{align*}
\Gamma_{i}(S)=\{w\in V| d(w,S)=i\}
\end{align*}
 for $0\leqslant i\leqslant k-2$. Define the event $\mathcal{F}_i$ to be
 $\mathcal{F}_i=\bigl\{|\Gamma_{i}(S)|\sim 2sd^i\bigr\}$ for $0\leqslant i\leqslant k-2$.
Let $\overline{\mathcal{F}_i}$ be the complement of the event $\mathcal{F}_i$,
which means that there exists some $\epsilon_i\in (0,1)$
such that
\begin{align*}
\bigl||\Gamma_{i}(S)|- 2sd^i\bigr|>\epsilon_i sd^i.
\end{align*}

\noindent {\bf Claim 5.1}\quad Fix a vertex subset $S\subseteq [n]$ with $|S|=2s$.
W.h.p. the events $\mathcal{F}_i$ for $0\leqslant i\leqslant k-2$
 are all true.

 \begin{proof}[Proof of Claim 5.1]\ Note that $\mathcal{F}_0$ is true
 because $|\Gamma_0(S)|=2s$. If $k=2$, we are done. Let $k\geqslant 3$ below.
 Assume the events $\mathcal{F}_{0},\cdots,\mathcal{F}_{i-1}$ are all true
 when $1\leqslant i\leqslant k-2$. Thus, $|\Gamma_{i}(S)|$ is a binomially distributed
 random variable with parameters $n-\sum_{j=0}^{i-1}|\Gamma_{j}(S)|$
 and $1-(1-p)^{|\Gamma_{i-1}(S)|}$ because each vertex in $[n]-\cup_{j=0}^{i-1}\Gamma_{j}(S)$
 is selected into $\Gamma_{i}(S)$ if and only if it is connected with at least one vertex in $\Gamma_{i-1}(S)$,
which occurs  with probability $1-(1-p)^{|\Gamma_{i-1}(S)|}$.  We have
\begin{align}
&\mathbb{P}[\overline{\mathcal{F}_{i}}|\mathcal{F}_{0},\cdots,\mathcal{F}_{i-1}]\notag\\
&=\mathbb{P}\Bigl[\Bigl|{\rm Bin}\Bigl(n-\sum_{j=0}^{i-1}|\Gamma_{j}(S)|,
1-(1-p)^{|\Gamma_{i-1}(S)|}\Bigr)-2sd^i\Bigr|> \epsilon_i sd^i\Bigr].
\end{align}

Since the events $\mathcal{F}_{0},\cdots,\mathcal{F}_{i-1}$ are all true by induction,
we have obtained $|\Gamma_{j}(S)|\sim 2sd^{j}$ for $0\leqslant j\leqslant i-1$ and
\begin{align*}
\sum_{j=0}^{i-1}|\Gamma_{j}(S)|&\sim\sum_{j=0}^{i-1}2sd^{j}
\sim 2sd^{i-1}
\end{align*}
where $\sum_{j=0}^{i-1}2sd^{j}\sim 2sd^{i-1}$ is true  because  $k$ is fixed and
 the sum $\sum_{j=0}^{i-1}2sd^{j}$ is bounded by an increasing geometric series
with common ratio $d$ dominated
by the term when $j=i-1$. By $s=O( \frac{n\log d}{d^{k-1}})$ in (5.1), we further have 
\begin{align*}
\sum_{j=0}^{i-1}|\Gamma_{j}(S)|\sim 2sd^{i-1}=o(n),
\end{align*}
and then
\begin{align}
n-\sum_{j=0}^{i-1}|\Gamma_{j}(S)|\sim n.
\end{align}

On the other hand,  since $|\Gamma_{i-1}(S)|\sim2sd^{i-1}$ by induction, and $s=O( \frac{n\log d}{d^{k-1}})$  in
 (5.1), we also have
\begin{align}
|\Gamma_{i-1}(S)|p= O\Bigl(\frac{\log d}{d^{k-i-1}}\Bigr)=o(1)
\end{align}
when $d\geqslant c$ for some large enough constant $c$ and $0\leqslant i\leqslant k-2$.
By Taylor's expansion,
\begin{align*}
(1-p)^{|\Gamma_{i-1}(S)|}&=\sum_{j=0}^{\infty} \binom{|\Gamma_{i-1}(S)|}{j}\bigl(-p\bigr)^j\\
&=1-|\Gamma_{i-1}(S)|p+O\bigl(|\Gamma_{i-1}(S)|^2p^2\bigr),
\end{align*}
and by the equation  in (5.4),
\begin{align}
1-(1-p)^{|\Gamma_{i-1}(S)|}
&=|\Gamma_{i-1}(S)|p\cdot\bigl(1+O(|\Gamma_{i-1}(S)|p)\bigr)\notag\\
&\sim 2sd^{i-1}p.
\end{align}
Combining  the equations in (5.2), (5.3) and (5.5), we have
\begin{align*}
\mathbb{P}\bigl[\overline{\mathcal{F}_{i}}|\mathcal{F}_{0},\cdots,\mathcal{F}_{i-1}\bigr]\sim\mathbb{P}\bigl[|{\rm Bin}(n,
2sd^{i-1}p)-2sd^i|> \epsilon_i sd^i\bigr].
\end{align*}
Note that $\mathbb{E}[{\rm Bin}(n, 2sd^{i-1}p)]=2sd^{i}$ by $d=np$. By  Lemma 2.1 with $\delta=\epsilon_i$, we have
\begin{align}
\mathbb{P}\bigl[\overline{\mathcal{F}_{i}}|\mathcal{F}_{0},\cdots,\mathcal{F}_{i-1}\bigr]
<2\exp\bigl[- \epsilon_i^2sd^i\bigr]\rightarrow 0
\end{align}
because  $s=\frac{n}{4d^{k-1}}[k\log d-3\log(k\log d)]$  in (5.1), $d^{k-1}=o(n)$
and $0\leqslant i\leqslant k-2$. At last, we have
\begin{align*}
\mathbb{P}\bigl[\mathcal{F}_0,\cdots,\mathcal{F}_{k-2}\bigr]&=\mathbb{P}[\mathcal{F}_0]\prod_{i=1}^{k-2}
\mathbb{P}\bigl[\mathcal{F}_{i}|\mathcal{F}_{0},\cdots,\mathcal{F}_{i-1}\bigr]=\prod_{i=1}^{k-2}
\mathbb{P}\bigl[\mathcal{F}_{i}|\mathcal{F}_{0},\cdots,\mathcal{F}_{i-1}\bigr]\rightarrow 1
\end{align*} because $\mathbb{P}[\mathcal{F}_0]=1$ and $k\geqslant 3$ is a fixed integer.
\end{proof}

\vskip 0.3cm

\noindent {\bf Claim 5.2}\quad For any  $S\subseteq [n]$ such that $|S|=2s$,
w.h.p. there are $ \frac{n}{d^{k/2}}(k\log d)^{3/2}$ vertices  at distance at least $k$
to  $S$. Moreover, these $ \frac{n}{d^{k/2}}(k\log d)^{3/2}$ vertices
induce at least one edge in $G$.

 \begin{proof}[Proof of Claim 5.2]\
Firstly, fix a vertex subset $S\subseteq [n]$ with $|S|=2s$. By Claim 5.1,
we have $\mathbb{P}[\mathcal{F}_{0},\cdots,\mathcal{F}_{k-2}]\rightarrow 1$.
Assume that the events $\mathcal{F}_{0},\cdots,\mathcal{F}_{k-2}$ hold below.
Let $A$ be the value  satisfying
\begin{align}
A= \frac{n}{d^{ k/2}}(k\log d)^{3/2}.
\end{align}Note that $A\rightarrow\infty$ when $n\rightarrow\infty$
because  $k\geqslant 2$ is a fixed integer and $d^{k-1}=o(n)$.
Define the event $\mathcal{E}_{ k}=\{|\Gamma_{\geqslant k}(S)|\sim A \}$,
where $\Gamma_{\geqslant k}({S})$ denotes the set of vertices in
$G$  whose distances are at least $k$ to each vertex in ${S}$.
Then, the complement $\overline{\mathcal{E}_k}$ of  $\mathcal{E}_{k}$ means that there exists some $\epsilon\in (0,1)$
such that
\begin{align*}
\bigl||\Gamma_{\geqslant k}(S)|- A\bigr|>\epsilon A.
\end{align*}

Since the events $\mathcal{F}_{0},\cdots,\mathcal{F}_{k-2}$ hold,
$|\Gamma_{\geqslant k}({S})|$ is a binomially distributed random variable
with parameters $n-\sum_{i=0}^{k-2}|\Gamma_{i}(S)|$ and $(1-p)^{|\Gamma_{k-2}(S)|}$
because each vertex in $[n]-\cup_{i=0}^{k-2}\Gamma_{i}(S)$
 is selected into $\Gamma_{\geqslant k}({S})$ independently if and only if it is not
 connected with any vertex in $\Gamma_{k-2}(S)$,
which occurs  with probability $(1-p)^{|\Gamma_{k-2}(S)|}$.
Thus, we have
\begin{align}
&\mathbb{P}[\mathcal{E}^c_{ k}|\mathcal{F}_{0},\cdots,\mathcal{F}_{k-2}]\notag\\
&=\mathbb{P}\Bigl[\Bigl|{\rm Bin}\Bigl(n-\sum_{i=0}^{k-2}|\Gamma_{i}(S)|,
(1-p)^{|\Gamma_{k-2}(S)|}\Bigr)-A\Bigr|> \epsilon A\Bigr].
\end{align}

Under the condition that the events $\mathcal{F}_{0},\cdots,\mathcal{F}_{k-2}$ hold and $k\geqslant 2$
is a fixed integer, we similarly have $\sum_{i=0}^{k-2}|\Gamma_{i}(S)|\sim \sum_{i=0}^{k-2}2sd^{i}
\sim2sd^{k-2}$ because the sum  is dominated by the term when $i=k-2$. By $s=O(\frac{n\log d}{d^{k-1}})$ in (5.1)
and $d\geqslant c$ for some large enough constant $c$,
we also have $\sum_{i=0}^{k-2}|\Gamma_{i}(S)|=O( \frac{n\log d}{d})=o(n)$, and then
\begin{align}
n-\sum_{i=0}^{k-2}|\Gamma_{i}(S)|&\sim n.
\end{align}
By $p=d/n=o(1)$, $1-p\sim \exp [-p]$ and $k\geqslant 2$ a fixed integer,
\begin{align}
(1-p)^{|\Gamma_{k-2}(S)|}&\sim \exp\bigl[- |\Gamma_{k-2}(S)|\cdot p\bigr]\notag\\
&\sim \exp\bigl[- 2sd^{k-2}p\bigr]\notag\\
&=\exp\Bigl[- \frac{1}{2}k\log d+ \frac{3}{2}\log( k\log d)\Bigr]\notag\\
&= \frac{A}{n},
\end{align}
where $|\Gamma_{k-2}(S)|\sim 2sd^{k-2}= \frac{n}{2d}[k\log d-3\log( k\log d)]$ is true
because the event $\mathcal{F}_{k-2}$ holds;
and the last equation is true because $A= \frac{n}{d^{ k/2}}(k\log d)^{3/2}$ in (5.7).
By Lemma 2.1 with $\delta=\epsilon$, the equations in (5.8), (5.9) and (5.10),  we have
\begin{align*}
&\mathbb{P}[\overline{\mathcal{E}_{ k}}|\mathcal{F}_{0},\cdots,\mathcal{F}_{k-2}]\notag\\
&\sim \mathbb{P}\bigl[\bigl|{\rm Bin}\bigl(n, A/{n}\bigr)-A\bigr|> \epsilon A\bigr]\notag\\
&<2\exp\Bigl[- \frac{\epsilon^2A}{2}\Bigr]\notag\\
&=2\exp\Bigl[- \frac{\epsilon^2n}{2d^{k/2}}\bigl(k\log d\bigr)^{3/2}\Bigr]\\
&\rightarrow 0.
\end{align*}
It follows that $\mathbb{P}[\mathcal{E}_{ k}|\mathcal{F}_{0},\cdots,\mathcal{F}_{k-2}]\rightarrow 1$.
By Claim 5.1, for a fixed vertex subset $S\subseteq [n]$
with $|S|=2s$,
\begin{align}
\mathbb{P}\bigl[\mathcal{F}_{0},\cdots,\mathcal{F}_{k-2},\mathcal{E}_{ k}\bigr]=\mathbb{P}\bigl[\mathcal{E}_{ k}|\mathcal{F}_{0},\cdots,\mathcal{F}_{k-2}\bigr]\cdot\mathbb{P}\bigl[\mathcal{F}_{0},\cdots,\mathcal{F}_{k-2}\bigr]\rightarrow 1.
\end{align}

Furthermore, under the condition that the events $\mathcal{F}_{0},\cdots,\mathcal{F}_{k-2},\mathcal{E}_{ k}$
hold, define the event $\mathcal{K}$ to be
\begin{align*}%
\mathcal{K}=\bigl\{\text{The vertex subset $\Gamma_{\geqslant k}(S)$
induces at least one edge in }G\bigr\}.
\end{align*}
Since $|\Gamma_{\geqslant k}(S)|\sim A$, $p=o(1)$ and $1-p\sim \exp[-p]$, we further have
\begin{align}
&\mathbb{P}[\overline{\mathcal{K}}|\mathcal{F}_{0},\cdots,\mathcal{F}_{k-2}, \mathcal{E}_{ k}]\notag\\
&\sim(1- p)^{\binom{A}{2}}\notag\\
&\sim \exp\Bigl[- \frac{A^2}{2}p \Bigr]\notag\\
&=\exp\Bigl[- \frac{n}{2d^{k-1}}\bigl(k\log d\bigr)^3 \Bigr],
\end{align}
where the last equality is true by the equation $A= \frac{n}{d^{ k/2}}(k\log d)^{3/2}$ in (5.7).


On the other hand, by the equation in (5.1), the number of the vertex subsets
$S\subseteq [n]$ with $|S|=2s$ is
\begin{align}
\binom{n}{2s}\leqslant \Bigl( \frac{en}{2s}\Bigr)^{2s}=
\exp\Bigl[2s\log\Bigl( \frac{en}{2s}\Bigr)\Bigr]<\exp\Bigl[ \frac{n}{2d^{k-1}}\bigl(k\log d\bigr)^2\Bigr].
\end{align}
By the equations in (5.12) and (5.13), the union bound of the probability of bad events
is
\begin{align}
 &\binom{n}{2s}\mathbb{P}[\mathcal{K}^c|\mathcal{F}_{0},\cdots,\mathcal{F}_{k-2}, \mathcal{E}_{ k}]\notag\\
 &\leqslant\exp\Bigl[ \frac{n}{2d^{k-1}}\bigl(k\log d\bigr)^2- \frac{n}{2d^{k-1}}\bigl(k\log d\bigr)^3 \Bigr]\notag\\
 &\rightarrow 0
\end{align}
when $d\geqslant c$ for some large enough constant $c$.

Finally, by the equations in (5.11) and (5.14), for any vertex subset $S\subseteq [n]$ of size $|S|=2s$,
w.h.p. there are $ \frac{n}{d^{k/2}}(k\log d)^{3/2}$ vertices  at distance at least $k$
to  $S$, and these $ \frac{n}{d^{k/2}}(k\log d)^{3/2}$ vertices induce at least one edge in $G$.

 We complete the proof of Claim 5.2.
\end{proof}
According to Claim 5.1 and Claim 5.2, we complete the proof of Theorem 5.1.
\end{proof}

\begin{remark}Using the exactly same argument in Theorem 5.1 and taking
$s= \frac{n}{2d^{k-1}}[(k-1)\log d-3\log((k-1)\log d)]$,
it also can be shown that every vertex subset $S$ with $2s$ vertices  has $ \frac{n}{2d^{k-1}}[(k-1)\log d]^3$
vertices at distance at least $k$ to $S$, instead it is not enough to  show these
vertices induce at least one edge. We can't improve Theorem~1.3 through this approach.
\end{remark}

\begin{remark}
Recently, Cooley et al. in~\cite{oliver21} showed that $um_2(G)=(1+o(1)) \frac{n\log d}{d}$
by the second moment method and Talagrand's inequality
  when  $ d\geqslant{c}$ and $d=o(n)$ for some large enough constant $c$. For $um_k(G)$ when $k\geqslant 3$,
  can we apply the similar approach to obtain better results than the one in Theorem 1.1?
  Unfortunately, it only holds on $um_2(G)$.


Regard $\mathcal{G}(n,p)=\prod_{i=1}^{n-1}Z_i$ as 
a product of $n-1$ probability spaces $Z_i$ for $i\in[n-1]$, where each $Z_i$
picks uniformly at random a subset of $[i]$ of size ${\rm Bin}(i,p)$ as the neighbours
of the vertex $i+1$ within $[i]$.
In order to apply Talagrand's inequality on the random variable $um_k(G)$,
it should be Lipschitz and $\theta$-certifiable for some function
$\theta:\mathbb{N}\rightarrow \mathbb{N}$. We say that $um_k(G)$ is Lipschitz if
$|um_k(G)-um_k(G')|\leqslant 1$ for every $G$ and $G'$
which differ in at most one coordinate. We say that $um_k(G)$ is
$\theta$-certifiable if for any $G\in \mathcal{G}(n,p)$ and $\xi\in \mathbb{N}$
such that $um_k(G)\geqslant \xi$,
these exists a set of coordinates $I\subset [n]$ with $|I|\leqslant \theta(\xi)$ such that
each $G'\in \mathcal{G}(n,p)$ which agrees with $G$ on $I$ also satisfies $um_k(G')\geqslant \xi$.

For $k\geqslant 3$, $um_k(G)$ is not Lipschitz and certifiable.
If we change one coordinate of $G$ such that the chosen
vertex adjacent to every vertex of a largest $k$-matching in $G$,
 then the $k$-matching number in $G'$ possibly is  one. Hence,
the effect of changing one coordinate of $G$ is that the size of a considered $k$-matching
may drop from $\Theta( \frac{k n\log d}{d^{k-1}})$ to $1$,
which means that $um_k(G)$ is not Lipschitz. 
Similarly, $um_k(G)$ is also not $\theta$-certifiable for
$\theta(x)=2x$. 

Without Talagrand's inequality, we only
obtain a weak lower bound of $um_k(G)=\Omega(\log n)$ by the second moment method.
We show the proof in the Appendix for readers' reference. It's an improved version of~\cite{tian18}.
\end{remark}

\section{Conclusions}

The main results of this paper characterize the value of $um_k(G)$ 
for some $G\in \mathcal{G}(n,p)$ and any fixed integer $k\geqslant 2$.
 The lower bounds here are within a factor two of the upper bound,
 where the first one is appropriate for any
maximal $k$-matching and the second one is obtained by a randomized greedy algorithm.
 The results partially generalize
some of the known results from the case when $k=2$ or $d=c$ for some large enough constant $c$.
It is interesting to consider the  lower bounds of
$um_k(G)$ for $G\in\mathcal{G}(n,p)$  when $k\geqslant 3$ and
$p$ lies in broader ranges. We also speculate the upper bound of $um_k(G)$ in Theorem~1.1 for some $G\in \mathcal{G}(n,p)$
is the asymptotic value of  $um_k(G)$. 
The approach of Cooley et al.~\cite{oliver21}
only holds on $um_2(G)$.
We believe it is hard to improve our results for $k\geqslant 3$ by similar discussions.
Note that $d^{k-1}=o(n)$ is a necessary condition in the proof
of Lemma~2.4 to finally help us prove Theorem~1.1 and Theorem~1.3.
 This is the limitation of our approaches. It will take some new
ideas and these new problems will be more complicated than the one here.
We leave these problems for future work.


\section*{Appendix}

Without Talagrand's inequality, we show a weak lower bound
of $um_k(G)$ by the second moment method. For a fixed integer $k\geqslant 3$, define $m\cdot2^{m+1}=n$, which implies $m=\Theta(\log n)$.
Let $\mathcal{M}$ be the set of matchings of size $m$ in  $K_n$ and  $M_i\in \mathcal{M}$
for $1\leqslant i\leqslant t$ be the matchings in $\mathcal{M}$, where
\begin{align*}
t={n\choose 2m}{2m\choose
{2,\cdots,2}}\frac{1}{m!}.
\end{align*}
Let $I_i$ be the indicator
random variable of the event that
${M}_i$ is a $k$-matching in $G$ and $X_m=\sum_{i=1}^tI_i$.
We also have
\begin{align*}
\mathbb{E}[X_m]&= \binom{n}{2m} \binom{2m}{{2,\cdots,2}} \frac{1}{m!}\mathbb{P}\bigl[{M_i}\ {\rm is\ a\
} k{\mbox-}{\rm
matching}\bigr]\notag\\
&\sim \frac{n^{2m}}{2^mm!}\mathbb{P}\bigl[{M_i}\ {\rm is\ a\
} k{\mbox-}{\rm
matching}\bigr].
\tag{1}
\end{align*}
In order to  prove the lower bound of $um_k(G)$,
  by Chebyshev's inequality, we will show
$$
\mathbb{P}[um_k(G)<m]
\leqslant \mathbb{P}[X_m=0]
\leqslant\mathbb{P}\bigl[|X_m-\mathbb{E}(X_m)|\geqslant\mathbb{E}(X_m)\bigr]
\leqslant\frac{\mathbb{V}[{X_m}]}{\mathbb{E}^2(X_m)}
\rightarrow 0,
$$
then w.h.p. $um_k(G)\geqslant m$.





Let ${M}_i,{M}_j\in\mathcal{M}$. It suffices to consider the case
where $M_i$ contains no edges with end-vertices lying in different edges of $M_j$ and vice verse.
Define
\begin{align*}
&C_e=\{e\in {M}_i\cap {M}_j\}\text{ with } c_e=|C_e|;\\
&C_v=\{v\in {M}_i\cap {M}_j\,|\, v\in e_i\in {M}_i, v\in e_j\in {M}_j, e_i\neq e_j \}\text{ with }
c_v=|C_v|.
\end{align*}
Namely, $C_e$ is the common edge set of ${M}_i$ and
${M}_j$, instead $C_v$ is the set of common vertices $v$ in
${M}_i$ and ${M}_j$ such that $v$ is incident to two
distinct edges, one in ${M}_i$ and the other in ${M}_j$.
Let ${{RM}_i}$ and ${{RM}_j}$
denote the rest edges in ${{M}_i}-C_e$ and
${{M}_j}-C_e$ that are not incident with $C_v$,
respectively. Denote
$|{{RM}_i}|=|{{RM}_j}|=m-c_e-c_v=r$, that is,  $c_e+c_v+r=m$ (see Fig.1).
The two matchings ${M}_i$ and ${M}_j$  characterized by
the above parameters are called  an $(r,c_v,c_e)$-pair.
Let $w_{r,c_v,c_e}$ be  the number of  $(r,c_v,c_e)$-pair
$({M}_i,{M}_j)$ for any given nonnegative
vector $(r,c_v,c_e)$ satisfying $r+c_e+c_v=m$.
Define
\begin{align*}
\mathcal{D}=\{(r,c_v,c_e)|\ r,c_v,c_e\ \text{are nonnegative integers such that } r+c_e+c_v=m\}.
\end{align*} 
For any given  $(r,c_v,c_e)\in \mathcal{D}$ and an $(r,c_v,c_e)$-pair
$({M}_i,{M}_j)$, let
\begin{align*}
\mathbb{E}_{r,c_v,c_e}=w_{r,c_v,c_e}\mathbb{P}\bigl[{M}_i\ {\rm and}\
{M}_j\ {\rm are}\ k\text{-matchings in }G\bigr].
\tag{2}
\end{align*}
Thus,
\begin{align*}
\mathbb{E}[X_m^2]=\sum_{(r,c_v,c_e)\in \mathcal{D}}\mathbb{E}_{r,c_v,c_e}.
\tag{3}
\end{align*}
\vskip 0.3cm
\noindent{\bf Claim A.1}\quad  For any given $(r,c_v,c_e)\in \mathcal{D}$, we have
\begin{align*}
w_{r,c_v,c_e}\sim \frac{n^{4m-(2c_e+c_v)}}{(r!)^2c_e!c_v!}2^{-2r-c_e}.
\end{align*}

\begin{proof}[Proof of Claim A.1] Since $w_{r,c_v,c_e}$ represents  the number of  $(r,c_v,c_e)$-pair
$({M}_i,{M}_j)$, thus
\begin{align*}
w_{r,c_v,c_e}&={n\choose {2r,2r,2c_e,c_v,c_v,c_v,n-4r-3c_v-2c_e}}\left[{2r\choose
{2,\cdots,2}}\frac{1}{r!}\right]^2\left[{2c_e\choose
{2,\cdots,2}}\frac{1}{c_e!}\right](c_v!)^2\\
&=\frac{n!}{(r!)^2c_e!c_v!(n-4r-3c_v-2c_e)!}2^{-2r-c_e}\\
&\sim\frac{n^{4m-(2c_e+c_v)}}{(r!)^2c_e!c_v!}2^{-2r-c_e},
\end{align*}
where the last approximate equality comes from the fact that $4r+3c_v+2c_e=4m-(2c_e+c_v)\leqslant 4m=o(n)$
 because  $m\cdot 2^{m+1}=n$.
\end{proof}
In particular, as $r=m$, then ${M}_i$ and
${M}_j$ are independent. By the equation in (1) and Claim A.1, we have
\begin{align*}
\mathbb{E}_{m,0,0}&=w_{m,0,0}\mathbb{P}[{M}_i\ {\rm and}\
{M}_j\ {\rm are}\ k\text{-matchings}]\\
&=w_{m,0,0}\mathbb{P}[{M}_i\ {\rm
is\ a}\ k\text{-matching}]^2\\
&\sim\mathbb{E}^2[X_m]
\end{align*}
and by the equation in (3),
\begin{align*}
\mathbb{V}[X_m]=\mathbb{E}[X_m^2]-\mathbb{E}^2[X_m]= \sum_{\small{(r,c_v,c_e)\in \mathcal{D},0\leqslant
r<m}}\mathbb{E}_{r,c_v,c_e}.
\tag{4}
\end{align*}

\begin{center}
\begin{tikzpicture}[scale=.4, vertex/.style = {shape = circle,fill=black, minimum size = 5pt, inner sep=0pt}]
\draw [draw=black, very thick] (7.6, 0.5) ellipse (3.5 cm and 2.8 cm);
\draw [draw=black, very thick] (3.6, 0.5) ellipse (3.5 cm and 2.8 cm);
\node at (1.3, 3.5) {\tiny ${M}_i$};
\node at (10.5, 3.5) {\tiny ${M}_j$};
\node at (8.5, 1.9) {\tiny ${RM}_j$};
\draw (7.5,1.2) [fill=black] circle (0.10);
\draw (7.5,-0.2) [fill=black] circle (0.10);
\draw (8.3,1.2) [fill=black] circle (0.10);
\draw (8.3,-0.2) [fill=black] circle (0.10);
\draw (9.1,1.2) [fill=black] circle (0.10);
\draw (9.1,-0.2) [fill=black] circle (0.10);
\node at (7.9, 0.5) {$\cdots$};
\node at (8.7, 0.5) {$\cdots$};

\draw (5.1,1.5) [fill=black] circle (0.10);
\draw (5.1,0.1) [fill=black] circle (0.10);
\draw (6.0,1.5) [fill=black] circle (0.10);
\draw (6.0,0.1) [fill=black] circle (0.10);
\node at (5.6, 0.8) {$\cdots$};
\node at (5.6, 2.1) {{\tiny $C_e$}};

\draw (2.2,1.2) [fill=black] circle (0.10);
\draw (2.2,-0.2) [fill=black] circle (0.10);
\draw (3.0,1.2) [fill=black] circle (0.10);
\draw (3.0,-0.2) [fill=black] circle (0.10);
\draw (3.8,1.2) [fill=black] circle (0.10);
\draw (3.8,-0.2) [fill=black] circle (0.10);
\node at (2.6, 0.5) {$\cdots$};
\node at (3.4, 0.5) {$\cdots$};
\node at (3, 2) {{\tiny ${RM}_i$}};

\draw (5.8,-0.5) [fill=black] circle (0.10);
\draw (7.3,-0.7) [fill=black] circle (0.10);
\draw [>=stealth, shorten >=2pt,line width=1pt] (5.8,-0.5)--(7.3,-0.7);
\node at (5.6, -1.1) {{\tiny$C_v$}};

\draw (3.3,-0.7) [fill=black] circle (0.10);
\draw [>=stealth, shorten >=2pt,line width=1pt] (5.8,-0.5)--(3.3,-0.7);
\draw [>=stealth, shorten >=2pt,line width=1pt] (7.5,1.2)--(7.5,-0.2);
\draw [>=stealth, shorten >=2pt,line width=1pt] (8.3,1.2)--(8.3,-0.2);
\draw [>=stealth, shorten >=2pt,line width=1pt] (9.1,1.2)--(9.1,-0.2);
\draw [>=stealth, shorten >=2pt,line width=1pt] (5.1,1.5)--(5.1,0.1);
\draw [>=stealth, shorten >=2pt,line width=1pt] (6.0,1.5)--(6.0,0.1);
\draw [>=stealth, shorten >=2pt,line width=1pt] (2.2,1.2)--(2.2,-0.2);
\draw [>=stealth, shorten >=2pt,line width=1pt] (3.0,1.2)--(3.0,-0.2);
\draw [>=stealth, shorten >=2pt,line width=1pt] (3.8,1.2)--(3.8,-0.2);
\node at (6.5, -3.7) {\small {Fig.1\quad The $(r,c_v,c_e)$-pair ${M}_i$ and ${M}_j$.}};
\end{tikzpicture}
\end{center}

\vskip 0.3cm

\noindent{\bf Claim A.2}\quad For any given $(r,c_v,c_e)\in \mathcal{D}$ with $0\leqslant r<m$, let ${M}_i$ and
${M}_j$ be an $(r,c_v,c_e)$-pair. Then,
\begin{align*}
\frac{\mathbb{P}[{M}_i,{M}_j \ {\rm are\ both}\
k\text{-matchings}]}{\mathbb{P}[{M}_i\ {\rm
is\ a}\ k\text{-\rm matching}]^2}\sim
p^{-c_e}(1-p_d)^{\frac{1}{2}[4c_e+c_v-(2c_e+c_v)^2]}.
\end{align*}

\begin{proof}[Proof of Claim A.2]
Let ${M}_i$ and ${M}_j$ be an $(r,c_v,c_e)$-pair. We have known
\begin{align*}
\mathbb{P}[{M}_i\ {\rm is\ a}\
k\text{-matching}]
=p^m\mathbb{P}[d_{G}(u,v)\geqslant k]^{2m(m-1)},
\tag{5}
\end{align*} thus
it suffices to consider $\mathbb{P}[{M}_i,{M}_j \ {\rm are\ both}\
k\text{-matchings}]$. First, there exists a penalty factor $p^{2m-c_e}$ to guarantee the
edges of ${M}_i\cup {M}_j$ present in $G$, and
the total number of the paths which join vertices on any two edges
in ${M}_i$ or any two ones in ${M}_j$ is exactly
\begin{align*}
&2{2m \choose 2}-{2c_e+c_v\choose 2}-(2m-c_e)\\
&=8{m\choose 2}-{2c_e+c_v\choose 2}+c_e\\
&= 4m^2-4m+ \frac{1}{2}\bigl[4c_e+c_v-(2c_e+c_v)^2\bigr],
\end{align*}
where $2{2m \choose 2}-{2c_e+c_v\choose 2}$ is the sum of the number of pairs of vertices in
${M}_i$ and the number of pairs of vertices in
${M}_j$, and $(2m-c_e)$ is the number of edges in ${M}_i\cup {M}_j$.
Thus,
\begin{align*}
&\mathbb{P}[{M}_i,{M}_j\ {\rm are\ both}\
k\text{-matchings}]\notag\\
&=p^{2m-c_e}\mathbb{P}[d_{G}(u,v)
\geqslant
k]^{4m^2-4m+\frac{1}{2}[4c_e+c_v-(2c_e+c_v)^2]}.
\tag{6}
\end{align*}Using the equations in (5) and (6),
\begin{align*}
&\frac{\mathbb{P}[{M}_i,{M}_j \ {\rm are\ both}\
k\text{-matchings}]}{\mathbb{P}[{M}_i\ {\rm
is\ a}\ k\text{-matching}]^2}\notag\\&=
p^{-c_e}\mathbb{P}[d_{G}(u,v)\geqslant
k]^{\frac{1}{2}[4c_e+c_v-(2c_e+c_v)^2]}\notag\\
&\sim p^{-c_e}(1-p_d)^{\frac{1}{2}[4c_e+c_v-(2c_e+c_v)^2]},
\end{align*}
where the last approximate equality comes from
$\mathbb{P}[d_{G}(u,v)\geqslant
k]\sim 1-p_d$ when $n\rightarrow \infty$ because
\begin{align*}
\exp\bigl[O(n^{k-3}p^{k-2}+n^{2k-4}p^{2k-2})\bigr]\rightarrow 1
\end{align*}
in Lemma 2.4\,(a) when $d^{k-1}=o(n)$. 
Hence, Claim A.2 holds.
\end{proof}

\vskip 0.3cm

Using the equations in (1), (2), Claim A.1 and Claim A.2, for $0\leqslant r< m$,
we also have
\begin{align*}
\frac{\mathbb{E}_{r,c_v,c_e}}{\mathbb{E}^2[X_m]}
&\sim\frac{(m!)^2n^{-2c_e-c_v}}{(r!)^2c_e!c_v!}2^{2c_v+c_e}p^{-c_e}(1-p_d)^{\frac{1}{2}[4c_e+c_v-(2c_e+c_v)^2]}.
\tag{7}
\end{align*}

\noindent{\bf Claim A.3}\quad Let
$
f(x)=\bigl(1-\frac{2n}{d^{k-1}}\log\frac{2m}{n}\bigr)x-4x^2,
$
then
$
f(x)\geqslant
f(m)\sim\frac{n(4m+2)}{c^{k-1}}\log 2
$ for $1\leqslant x\leqslant m$.

\begin{proof}[Proof of Claim A.3] Since $f(x)$ is a symmetric concave downward
quadratic function, it is easy to see the maximum point of $f(x)$ occurs at 
$$
x=\frac{1-\frac{2n}{d^{k-1}}\log\frac{2m}{n}}{8}
\sim\frac{mn\log 2}{4d^{k-1}}>m
$$
when $m\cdot2^{m+1}=n$.
Therefore,
$$
f(x)\geqslant
f(1)= \frac{2mn\log 2}{d^{k-1}}-3\sim \frac{2mn\log 2}{d^{k-1}}
$$ when $1\leqslant x\leqslant m$.
\end{proof}

\vskip 0.3cm
\noindent{\bf Claim A.4}\quad $\mathbb{V}[{X_m}]=o(\mathbb{E}^2[X_m])$.

\begin{proof}[Proof of Claim A.4] By the equation in (4), in order to show $\mathbb{V}[{X_m}]=o(\mathbb{E}^2[X_m])$, it remains to prove that
$$
\sum_{(r,c_v,c_e), 0\leqslant
r<m}\frac{\mathbb{E}_{r,c_v,c_e}}{\mathbb{E}^2[X_m]}\rightarrow 0. 
$$
By the equation in (7) and $m=r+c_e+c_v$, we have
\begin{align*}
\frac{\mathbb{E}_{r,c_v,c_e}}{\mathbb{E}^2(X_m)} 
&\sim\frac{(m!)^2n^{-2c_e-c_v}}{(r!)^2c_e!c_v!}2^{2c_v+c_e}p^{-c_e}(1-p_d)^{\frac{1}{2}[4c_e+c_v-(2c_e+c_v)^2]}\\
&<\frac{m!}{r!c_e!c_v!}2^{c_v}\Bigl(\frac{(1-p_d)^{1.5}}{d}\Bigr)^{c_e}\Bigl(\frac{2m}{n}\Bigr)^{c_e+c_v}(1-p_d)^{\frac{1}{2}[c_e+c_v-4(c_e+c_v)^2]},
\tag{8}
\end{align*}
where the inequality comes from $\frac{m!}{r!}<m^{m-r}=m^{c_v+c_e}$, $d=np$ and
$$
(1-p_d)^{\frac{1}{2}[4c_e+c_v-(2c_e+c_v)^2]}<(1-p_d)^{\frac{1}{2}[4c_e+c_v-4(c_e+c_v)^2]}.
$$

Note that
\begin{align*}
&\Bigl(\frac{2m}{n}\Bigr)^{c_e+c_v}(1-p_d)^{\frac{1}{2}[c_e+c_v-4(c_e+c_v)^2]}\\
&=(1-p_d)^{\frac{1}{2}(c_e+c_v)\left[1+2\log_{1-p_d}\frac{2m}{n}-4(c_e+c_v)\right]}\\
&=(1-p_d)^{\frac{1}{2}(c_e+c_v)\big[1+2\frac{\log\frac{2m}{n}}{\log(1-p_d)}-4(c_e+c_v)\big]}\\
&\sim(1-p_d)^{\frac{1}{2}(c_e+c_v)\left[1-\frac{2n}{d^{k-1}}\log\frac{2m}{n}-4(c_e+c_v)\right]},
\end{align*}
where the last approximate equality is true because  $\log(1-p_d)\sim -p_d$ when $p_d=o(1)$.
By Claim A.3 and $1\leqslant c_e+c_v=m-r\leqslant m$ when $0\leqslant r< m$,
we further have
\begin{align*}
\frac{\mathbb{E}_{r,c_v,c_e}}{\mathbb{E}^2(X_m)}
&<\frac{m!}{r!c_e!c_v!}2^{c_v}\Bigl(\frac{(1-p_d)^{1.5}}{d}\Bigr)^{c_e}(1-p_d)^{\frac{2mn}{d^{k-1}}\log
2}.
\end{align*}

At last, it follows that
\begin{align*}
\sum_{(r,c_v,c_e), 0\leqslant
r<m}\frac{\mathbb{E}_{r,c_v,c_e}}{\mathbb{E}^2(X_m)}&<
(1-p_d)^{{\frac{2mn}{d^{k-1}}\log
2}}\sum_{(r,c_v,c_e), 0\leqslant
r<m}\frac{m!}{r!c_e!c_v!}2^{c_v}\Bigl(\frac{(1-p_d)^{1.5}}{d}\Bigr)^{c_e}\\
& <(1-p_d)^{{\frac{2mn}{c^{k-1}}\log
2}}\Bigl(3+\frac{(1-p_d)^{1.5}}{d}\Bigr)^m\\
&=\Bigl[(1-p_d)^{\frac{2n}{c^{k-1}}\log
2}\Bigl(3+\frac{(1-p_d)^{1.5}}{d}\Bigr)\Bigr]^m,
\tag{9}\end{align*}
where the second inequality uses the fact that for real number
$x_0,\cdots,x_l$, 
$$
(x_0+x_1+\cdots+x_l)^m=\sum_{\begin{subarray}{l}k_0,\cdots,k_l\geqslant
0\\k_0+\cdots+k_l=m\end{subarray}}\frac{m!}{k_0!\cdots k_l!}x_0^{k_0}\cdots x_l^{k_l}.
$$
In (9), since $p_d=d^{k-1}/n=o(1)$, we have
\begin{align*}
&(1-p_d)^{\frac{2n}{d^{k-1}}\log
2}\Bigl(3+\frac{(1-p_d)^{1.5}}{d}\Bigr)
\sim 3(1-p_d)^{\frac{2n}{c^{k-1}}\log
2}\rightarrow \frac{3}{4},
\end{align*}
thus
$
\sum_{(r,c_v,c_e), 0\leqslant
r<m}\frac{\mathbb{E}_{r,c_v,c_e}}{\mathbb{E}^2(X_m)}\rightarrow 0 
$ and Claim A.4 holds.
\end{proof}


\end{document}